\newtheorem{thm}{Theorem}[section]
\newtheorem{lem}[thm]{Lemma}
\newtheorem{prop}[thm]{Proposition}
\theoremstyle{definition}
\newtheorem{dfn}[thm]{Definition}
\newtheorem{rmk}[thm]{Remark}
\numberwithin{equation}{section}
\newcommand{\var}{\overline}
\def\C{{\mathbb C}}
\def\Q{{\mathbb Q}}
\def\R{{\mathbb R}}
\def\Z{{\mathbb Z}}
\def\P{{\mathbb P}}
\def\var{\overline}
\DeclareMathOperator{\Div}{Div}
\DeclareMathOperator{\pr}{pr}
\DeclareMathOperator{\Pic}{Pic}
\DeclareMathOperator{\cl}{cl}
\DeclareMathOperator{\id}{id}
\DeclareMathOperator{\Aut}{Aut}
\newtheorem{exam}[thm]{Example}
\numberwithin{equation}{section}
\newcommand{\plim}[1][]{\mathop{\varprojlim}\limits_{#1}}
\newenvironment{parts}[0]{%
  \begin{list}{}%
    {\setlength{\itemindent}{0pt}
     \setlength{\labelwidth}{1.5\parindent}
     \setlength{\labelsep}{.5\parindent}
     \setlength{\leftmargin}{2\parindent}
     \setlength{\itemsep}{0pt}
     }%
   }%
  {\end{list}}
\newcommand{\Part}[1]{\item[\upshape#1]}
\def\vphi{{\varphi}}
\DeclareMathOperator{\Nu}{N^1}
\DeclareMathOperator{\p}{pr}
\DeclareMathOperator{\EV}{EV}
\title[preperiodicity and the arithmetic degree]
{The canonical heights for Jordan blocks of small eigenvalues,
preperiodic points, and the arithmetic degrees}
\author{Kaoru Sano}
\address{Department of Mathematics, Faculty of Science, Kyoto University, Kyoto 606-8502, Japan}
\email{ksano@math.kyoto-u.ac.jp}
\begin{document}
\maketitle
\begin{abstract}
We introduce a new canonical height function
for Jordan blocks of small eigenvalues for endomorphisms
on smooth projective varieties over a number field.
We prove that under an assumption on the eigenvalues
of the endomorphism on the group of divisors modulo numerical equivalence,
the arithmetic degree at a rational point is equal to one if and only if
it is preperiodic under the endomorphism.
\end{abstract}
\tableofcontents

\section{Introduction}\label{intro}
Let $X$ be a smooth projective variety
and $f\colon X\longrightarrow X$ a surjective endomorphism on $X$
both defined over $\var{\Q}$.
Here an endomorphism simply means a self-morphism.
Fix an ample divisor $H$ on $X$ defined over $\var{\Q}$
and take a Weil height function $h_H\colon X(\var{\Q})\longrightarrow \R$
associated with $H$;
see \cite[Theorem B.3.2]{HS} and \cite[Chapter 3 and 4]{Lan}.
For a point $P\in X(\var{\Q})$,
the {\it arithmetic degree} $\alpha_f(P)$ of $f$ at $P$ is defined by
\[\alpha_f(P) :=\lim_{n\to\infty} \max\{ h_H(f^n(P)), 1 \}^{1/n}.
\]
It is known that the arithmetic degree $\alpha_f(P)$ does not depend on the choice of $H$ and $h_H$; see Remark \ref{Remark: arithmetic degree}.

This paper is motivated by the following result of Kawaguchi and Silverman:
they proved that either $\alpha_f(P)=1$, or $\alpha_f(P)$ is equal to
the complex absolute value of an eigenvalue of the linear self-map
\[
f^\ast \colon \Nu(X)_{\Q} \longrightarrow \Nu(X)_{\Q}
\]
induced by $f$; see
\cite[Remark 23]{KS3} for details.
Here, $\Nu(X)$ denotes the group of divisors on $X$ modulo numerical equivalence,
and we put $\Nu(X)_\Q:=\Nu(X)\otimes_{\Z}\Q$.

We say that a point $P\in X(\var{\Q})$ is {\it preperiodic} under $f$
if $f^m(P)=f^n(P)$ for some $m>n\geq 1$.
It is easy to see that if $P$ is preperiodic under $f$,
we have $\alpha_f(P)=1$.
In this paper, we shall provide a sufficient condition under which the converse is true.

Let $V_H$ be the $\Q$-linear subspace
of $\Pic(X)_\Q:=\Pic(X)\otimes_\Z \Q$ spanned by
the set
\[
\{(f^n)^\ast H\ |\ n\geq 0\},\]
and $\var{V_H}$ the image of $V_H$ in $\Nu(X)_\Q$.
It is known that $V_H$ and $\var{V_H}$ are finite dimensional $\Q$-vector space;
see Lemma \ref{Lemma: finiteness}.

\begin{thm}\label{Theorem: preperiodicity}
Let $X$ be a smooth projective variety defined over $\var{\Q}$, and
$f\colon X\longrightarrow X$ a surjective endomorphism
on $X$ over $\var{\Q}$.
Assume that $1$ does not appear as the complex absolute value of
an eigenvalue of the linear self-map
\[
f^\ast \colon \var{V_H} \longrightarrow \var{V_H}.
\]
We put
\[
\mu_H(f) := \min \left\{ |\lambda |\ \middle|\ \lambda \text{ is an eigenvalue of }
f^\ast \colon \var{V_H} \longrightarrow \var{V_H} \text{ with }|\lambda|\geq 1\right\}.
\]
Then for every point $P\in X(\var{\Q})$, the following conditions are equivalent.
\begin{itemize}
\item $P$ is preperiodic under $f$.
\item $\alpha_f(P)< \mu_H(f)$.
\item $\alpha_f(P)=1$.
\end{itemize}
\end{thm}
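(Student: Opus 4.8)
The plan is to prove the cycle of implications (preperiodic) $\Rightarrow$ ($\alpha_f(P)=1$) $\Rightarrow$ ($\alpha_f(P)<\mu_H(f)$) $\Rightarrow$ (preperiodic). The first implication is the elementary one already noted in the introduction: a finite orbit has bounded $h_H$-values, hence $\alpha_f(P)=1$. The second holds because $\mu_H(f)>1$: since $\big((f^n)^\ast H\big)^{\dim X}=(\deg f)^n\,H^{\dim X}\to\infty$, the images of $(f^n)^\ast H$ in $\var{V_H}$ do not tend to $0$, so the spectral radius of $f^\ast$ on $\var{V_H}$ is $\ge1$, hence $>1$ by the hypothesis; thus the minimum defining $\mu_H(f)$ is taken over a nonempty set and exceeds $1$. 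Everything is therefore in the last implication. So assume $\alpha_f(P)<\mu_H(f)$; after enlarging the ground field to a number field over which $X$, $f$, $P$, $H$ are all defined, Northcott's theorem reduces us to showing that $\big\{h_H(f^n(P))\big\}_n$ is bounded.

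Here I would split $\var H$ using the hypothesis. Write $\var{V_H}\otimes\R=\var V^{-}\oplus\var V^{+}$ for the $f^\ast$-invariant decomposition into the sums of generalized eigenspaces whose eigenvalue has absolute value $<1$, resp.\ $>1$; by the hypothesis there is no part of absolute value $1$, and in general these summands are defined only over $\R$. Choose $\R$-divisors $D^{-}$, $D^{+}$ whose classes in $\var{V_H}$ are the two components of $\var H$, so $D^{-}+D^{+}=H+E$ in $\Pic(X)_\R$ with $E$ algebraically trivial, and hence $h_H=h_{D^{-}}+h_{D^{+}}-h_E+O(1)$. I would then control the three terms along the orbit of $P$. (i) By the canonical height for Jordan blocks of small eigenvalues constructed in this paper, $h_{D^{-}}=\widehat h_{D^{-}}+O\!\big(\sqrt{h_H}\big)$ with $\widehat h_{D^{-}}$ bounded along every $f$-orbit, so $h_{D^{-}}(f^n(P))=O\!\big(\sqrt{h_H(f^n(P))}\big)+O(1)$. (ii) By the Kawaguchi--Silverman-type canonical height $\widehat h^{+}$ attached to $\var V^{+}$, which is $f$-equivariant and, whenever nonzero at a point $Q$, forces $\alpha_f(Q)\ge\mu_H(f)$ (the least absolute value of an eigenvalue of $f^\ast$ on $\var V^{+}$), the hypothesis $\alpha_f(P)<\mu_H(f)$ makes $\widehat h^{+}$ vanish along the orbit of $P$; since $h_{D^{+}}=\widehat h^{+}+O\!\big(\sqrt{h_H}\big)$, we get $h_{D^{+}}(f^n(P))=O\!\big(\sqrt{h_H(f^n(P))}\big)$. (iii) Since $E$ is algebraically trivial, the standard estimate (through the Albanese morphism and the theory of canonical heights on abelian varieties) gives $h_E(Q)=O\!\big(\sqrt{h_H(Q)}\big)$. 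Combining, $h_H(f^n(P))=O(1)+O\!\big(\sqrt{h_H(f^n(P))}\big)$, which forces $h_H(f^n(P))$ to be bounded; by Northcott $\{f^n(P)\}$ is finite, i.e.\ $P$ is preperiodic.

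The genuine work is in (i) and (ii). On the side of large eigenvalues, making (ii) precise through Jordan blocks and complex eigenvalues, with $\alpha_f(P)$ only available as a limit, is delicate: the point is that once $\widehat h^{+}$ is nonzero at $P$, even the component along the slowest expanding eigenvalue survives to make $h_H(f^n(P))$ at least of size $\mu_H(f)^n$ for infinitely many $n$, which is exactly what pins the threshold at $\mu_H(f)$ rather than at the spectral radius. On the side of small eigenvalues this is where the paper's new construction is essential: the classical averaging $\lim\lambda^{-n}h_D(f^n(\cdot))$ diverges when $|\lambda|<1$, so one needs a different recipe producing a height that is still close to the Weil height (up to a non-expanding error) and bounded along orbits. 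Finally, the whole reduction relies on $\var V^{-}\oplus\var V^{+}$ exhausting $\var{V_H}$, and it is precisely the assumption that $1$ is not the absolute value of an eigenvalue of $f^\ast$ on $\var{V_H}$ that guarantees this; without it there can be a ``translation-type'' contribution along which $h_H(f^n(P))$ grows polynomially while $\alpha_f(P)$ remains $1$, which would contradict the statement.
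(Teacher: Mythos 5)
Your proposal follows essentially the same route as the paper: split the classes spanned by $\{(f^n)^\ast H\}$ into an expanding part, a contracting part, and an algebraically trivial part; kill the expanding canonical heights along the orbit using $\alpha_f(P)<\mu_H(f)$; invoke the new small-eigenvalue canonical heights (Proposition \ref{Proposition: boundedness}) to bound the contracting part along the forward orbit; absorb the algebraically trivial part into an $O(\sqrt{h_H})$ (the paper uses $o(h_H)$) error; and finish with Northcott. The one substantive difference is organizational, and it creates a wrinkle you should address: you perform the eigenspace decomposition on $\var{V_H}\otimes\R$ and then lift to divisors $D^{\pm}$, so the functional equations $f^\ast D_j\sim D_{j-1}+\lambda D_j$ hold only modulo algebraically trivial divisors, whereas Theorem \ref{Theorem: canonical heights} (via Proposition \ref{Proposition: axiomatic canonical heights}) needs a \emph{bounded} error function $h\circ R-\Lambda^{-1}h=O(1)$ to make the telescoping series converge; an $O(\sqrt{h_H})$ discrepancy along a backward orbit is not obviously summable against $\Lambda^{n+1}$. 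The paper sidesteps this by doing the Jordan decomposition inside $V_H\subset\Pic(X)_\C$ itself, where the relations are exact linear equivalences, and then using an elementary linear-algebra lemma (Lemma 4.1(c)) to show that the blocks whose eigenvalues do not survive in $\var{V_H}$ consist entirely of algebraically trivial classes, which are then handled separately exactly as in your step (iii). With that reordering (decompose in $\Pic$ first, compare to $\var{V_H}$ second), your argument is the paper's proof; the remaining details you flag (the nonemptiness of the set defining $\mu_H(f)$, and extracting $\alpha_f(P)\ge|\lambda_{i_0}|$ from a nonvanishing canonical height on the slowest expanding block) are handled in the paper just as you describe.
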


To prove this theorem, we shall introduce the notion of canonical heights
for Jordan blocks of small eigenvalues, which are defined on the set

\[
\plim[f] X(\var{\Q}):= \left\{ \text{sequence }(P_n)_{n\in\Z}\in X(\var{\Q})^{\Z}
\middle|\ f (P_n) = P_{n+1} \text{ for all }n\in\Z \right\}.
\]
For each $m\in\Z$, we set 
\begin{align*}
\pr_m \colon \plim[f] X(\var{\Q}) &\longrightarrow X(\var{\Q})\\
(P_n)_{n\in\Z} &\mapsto P_m.
\end{align*}
Note that if $f$ is an automorphism, the set $\plim[f] X(\var{\Q})$ is identified with
the set $\{f^n(P)\ |\ n\in\Z\}$, which is the union
of the forward $f$-orbit of $P$ and the forward $f^{-1}$-orbit of $P$.

The canonical heights for Jordan blocks associated
with an eigenvalue $\lambda \in \C$ satisfying $|\lambda|>1$
were introduced by Kawaguchi and Silverman in \cite[Theorem 13]{KS3}.
We generalize their results to eigenvalues whose complex absolute values
are different from $0$ and $1$ using the set $\plim[f]X(\var{\Q})$.
\begin{thm}[{The canonical heights for Jordan blocks}]\label{Theorem: canonical heights}
Let $X$ be a smooth projective variety defined over $\var{\Q}$,
and $f\colon X \longrightarrow X$ a surjective endomorphism over $\var{\Q}$.
Let $\lambda\in \C$ be a complex number with $|\lambda|\neq 0,1$.
Let
\[D_j \in \Div(X)_\C:=\Div(X)\otimes_\Z \C \quad (0\leq j\leq l)\]
be $\C$-divisors satisfying the following linear equivalences
\[
f^\ast D_j \sim D_{j-1}+\lambda D_j \quad (0\leq j \leq l),
\]
where we set $D_{-1}:=0$.
For each $D_j$, fix a Weil height function $h_{D_j}$.
Then there are unique functions
\[\widehat{h}_{D_j}\colon \plim[f] X(\var{\Q}) \longrightarrow \C \quad (0\leq j\leq l)
\]
satisfying the normalization condition
\[
\widehat{h}_{D_j}=h_{D_j}\circ \pr_0+O(1) \]
and the functional equations
\[
\widehat{h}_{D_j}\circ f = \widehat{h}_{D_{j-1}}+\lambda \widehat{h}_{D_j}\quad (0\leq j\leq l),\]
where we set $\widehat{h}_{D_{-1}}:=0$.
\end{thm}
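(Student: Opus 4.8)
The plan is to package the $D_j$'s into one vector and the system of linear equivalences into a Jordan matrix, so that the whole construction reduces to a single telescoping estimate. Set $\mathbf{h}:=(h_{D_0},\dots,h_{D_l})^{\top}$, viewed as a $\C^{l+1}$-valued function on $X(\var{\Q})$, and let $M$ be the $(l+1)\times(l+1)$ matrix (indices $0,\dots,l$) with $M_{jj}=\lambda$ and $M_{j,j-1}=1$, all other entries zero, so that the given linear equivalences become $f^{\ast}\mathbf{D}\sim M\,\mathbf{D}$ for $\mathbf{D}=(D_0,\dots,D_l)^{\top}$. Writing $M=\lambda I+N$ with $N$ nilpotent ($N^{l+1}=0$), the Weil height machine (functoriality, invariance under linear equivalence, and $\C$-linearity in the divisor) yields a constant $C>0$ with
\[
\bigl\|\mathbf{h}(f(P))-M\,\mathbf{h}(P)\bigr\|\le C\qquad(P\in X(\var{\Q})),
\]
which is just the system $h_{D_j}\circ f=h_{D_{j-1}}+\lambda h_{D_j}+O(1)$ with uniform $O(1)$. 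The one elementary fact I would record is the estimate $\|M^{\pm n}\|=O\bigl(n^{l}|\lambda|^{\pm n}\bigr)$, obtained from the binomial expansion of $(\lambda I+N)^{\pm n}$ together with $N^{l+1}=0$; the hypothesis $|\lambda|\ne 0,1$ enters exactly here, guaranteeing that one of the two series $\sum_{n}\|M^{-n}\|$, $\sum_{n}\|M^{n}\|$ converges.

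Next I would build $\widehat{\mathbf{h}}:=(\widehat h_{D_0},\dots,\widehat h_{D_l})^{\top}$ on $\plim[f]X(\var{\Q})$ by telescoping along the two-sided orbit. For $(P_n)_{n\in\Z}$ in the inverse limit, iterating the displayed relation from index $0$ gives, for $n\ge 0$,
\[
M^{-n}\mathbf{h}(P_n)=\mathbf{h}(P_0)+\sum_{k=0}^{n-1}M^{-1-k}\,\mathbf{e}(P_k),\qquad \mathbf{e}:=\mathbf{h}\circ f-M\,\mathbf{h},\quad\|\mathbf{e}\|\le C.
\]
If $|\lambda|>1$ then $\sum_{k\ge 0}\|M^{-1-k}\|<\infty$ and I define $\widehat{\mathbf{h}}\bigl((P_n)_n\bigr):=\lim_{n\to\infty}M^{-n}\mathbf{h}(P_n)$, the right-hand series converging absolutely and uniformly in $(P_n)_n$. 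If $|\lambda|<1$, the same identity with base index $-m$ reads $M^{m}\mathbf{h}(P_{-m})=\mathbf{h}(P_0)-\sum_{j=1}^{m}M^{\,j-1}\mathbf{e}(P_{-j})$, and since $\sum_{j\ge 1}\|M^{\,j-1}\|<\infty$ I define $\widehat{\mathbf{h}}\bigl((P_n)_n\bigr):=\lim_{m\to\infty}M^{m}\mathbf{h}(P_{-m})$. In both cases the correction added to $\mathbf{h}(P_0)=\mathbf{h}\circ\pr_0$ is a uniformly bounded series, which is exactly the normalization $\widehat h_{D_j}=h_{D_j}\circ\pr_0+O(1)$. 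For the functional equation $\widehat{\mathbf{h}}\circ f=M\,\widehat{\mathbf{h}}$ — equivalently $\widehat h_{D_j}\circ f=\widehat h_{D_{j-1}}+\lambda\widehat h_{D_j}$ with $\widehat h_{D_{-1}}:=0$ — one substitutes $\pr_m\circ f=\pr_{m+1}$ into the defining limit, reindexes, and pulls a single factor of $M$ through the limit.

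For uniqueness, suppose $\widehat{\mathbf{h}}'$ and $\widehat{\mathbf{h}}''$ both satisfy the normalization and the functional equation, and set $\mathbf{g}:=\widehat{\mathbf{h}}'-\widehat{\mathbf{h}}''$. Then $\mathbf{g}$ is bounded on $\plim[f]X(\var{\Q})$ and $\mathbf{g}\circ f=M\,\mathbf{g}$. Here the two-sided inverse limit is essential: the shift $f$ is manifestly a bijection of $\plim[f]X(\var{\Q})$ (with inverse the backward shift), so for every $\widetilde P$ and every $n\ge 1$ we may write $\mathbf{g}(\widetilde P)=M^{n}\,\mathbf{g}\bigl(f^{-n}(\widetilde P)\bigr)$, resp.\ $\mathbf{g}(\widetilde P)=M^{-n}\,\mathbf{g}\bigl(f^{n}(\widetilde P)\bigr)$; using $\|M^{n}\|\to 0$ when $|\lambda|<1$, resp.\ $\|M^{-n}\|\to 0$ when $|\lambda|>1$, together with the boundedness of $\mathbf{g}$, letting $n\to\infty$ forces $\mathbf{g}\equiv 0$. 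I expect the main point beyond the case $|\lambda|>1$ of \cite[Theorem 13]{KS3} to be precisely the $|\lambda|<1$ regime, where both the convergent limit and the vanishing argument must run backward along the orbit — this is why the natural domain is $\plim[f]X(\var{\Q})$ rather than $X(\var{\Q})$ — the routine-but-necessary checks being that the backward telescoping estimate dovetails with the bijectivity of the shift and that the $\C$-valued height machine behaves functorially. As a byproduct, replacing each $h_{D_j}$ by another Weil height in its $O(1)$-class does not change the normalization condition, so the $\widehat h_{D_j}$ are independent of the chosen representatives.
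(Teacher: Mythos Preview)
Your proposal is correct and follows essentially the same route as the paper: package the heights into a $\C^{l+1}$-valued function, use the Jordan block matrix, establish convergence of the telescoping series via the estimate $\|M^{\pm n}\|=O(n^{l}|\lambda|^{\pm n})$, and deduce uniqueness from boundedness together with the bijectivity of the shift on $\plim[f]X(\var{\Q})$. The only cosmetic difference is that the paper first isolates an abstract version (Proposition~\ref{Proposition: axiomatic canonical heights}, for an arbitrary set $S$ with a self-bijection $R$ and the relation $\|h\circ R-\Lambda^{-1}h\|=O(1)$) and then specializes to $S=\plim[f]X(\var{\Q})$ with the right or left shift; your argument carries out the same computation directly on the inverse limit.
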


\begin{rmk}
Kawaguchi and Silverman proved the following results in \cite{KS1}.
Assume that $f\colon X\longrightarrow X$ is a polarized surjective endomorphism
(i.e., there is an ample $\R$-divisor $H$ satisfying the numerical equivalence
$f^\ast H\equiv \delta_f H$ for some $\delta_f\in \R_{>1}$).
Then we have $\alpha_f(P)\in \{1, \delta_f\}$ for any point $P\in X(\var{\Q})$.
Furthermore, we have $\alpha_f(P)=1$ if and only if $P$ is preperiodic under $f$;
see \cite[Proposition 7]{KS1} for details.
So Theorem \ref{Theorem: preperiodicity} can be regarded as
a generalization of this fact.
\end{rmk}

We now briefly sketch the plan of this paper.
In Section \ref{Section: Notation and definitions}, we set some notation and definitions.
In Section \ref{Section: The canonical heights},
we introduce the notion of the canonical heights for Jordan blocks
associated with eigenvalues whose complex absolute values are different from $0$ and $1$.
It simplifies the proof of Theorem \ref{Theorem: preperiodicity}.
In Section \ref{Section: On the main theorem},
we prove Theorem \ref{Theorem: preperiodicity}.
Since Theorem \ref{Theorem: preperiodicity} is a refinement
of the results proved by Kawaguchi and Silverman in \cite{KS3},
we often refer to their paper \cite{KS3}.
In Section \ref{Section: Proof of the application to KSC},
we provide an application to the relation between
the dynamical degree and the arithmetic degree;
see Proposition \ref{Theorem: application}.
We prove a conjecture proposed by Kawaguchi and Silverman
for certain endomorphisms on smooth projective varieties.
Finally, we give some remarks on the dynamical degrees and the proof of Theorem \ref{Theorem: preperiodicity}.
\section{Notation and definitions}\label{Section: Notation and definitions}
In this paper, we use following notation.
\begin{itemize}
\item $\Q$, $\R$, and $\C$ are the set of
rational numbers, real numbers, and complex numbers, respectively.
\item In this paper, we work over $\var{\Q}$.
Recall that there is a Weil height function
$h\colon \var{\Q}\longrightarrow \R$ satisfying Northcott's property.
Namely, for any positive integer $d$ and for any real number $B$,
the set
\[\bigcup_{\substack{\var{\Q}/K/\Q \\ [K:\Q]\leq d}}\{x\in K \ |\ h(x)<B\}\]
is finite; see \cite[B.3]{HS} and \cite[Chapter 3 and 4]{Lan}.
\item $X$ is a smooth projective variety defined over $\var{\Q}$.
\item $H$ is an ample divisor on $X$ defined over $\var{\Q}$.
\item $f\colon X\longrightarrow X$ is a surjective endomorphism on $X$ over $\var{\Q}$.
\item $\deg f:= [\var{\Q}(X):f^\ast \var{\Q}(X)]$ is the degree of extension of function fields.
\item The {\it forward $f$-orbit} of a point $P\in X(\var{\Q})$ is the set
$\O_f(P):=\{ f^n(P)\ |\ n \geq 0\}$.
\item $P\in X(\var{\Q})$ is said to be {\it preperiodic} (resp.\ {\it wandering})
under $f$ if the forward $f$-orbit $\O_f(P)$ is finite (resp.\ infinite).
\item $\Div(X)$ is the group of divisors on $X$ defined over $\var{\Q}$.
\item $\Pic (X)$ is the Picard group of $X$.
\item $\Nu(X)$ is the quotient group of $\Pic(X)$ by the numerical equivalence.
\item For a divisor $D\in \Div(X)$, fix a Weil height function
\[
h_D\colon X(\var{\Q})\longrightarrow \R;
\]
see \cite[Theorem B. 3.2]{HS} and \cite[Chapter 4, Theorem 5.1]{Lan}.
For a $\C$-divisor $D=\sum_{i=1}^r a_i D_i \in \Div(X)_\C\ (a_i\in \C)$,
the Weil height function $h_D$ associated with $D$ is defined by
\[
h_D:=\sum_{i=1}^r a_i h_{D_i}\colon X(\var{\Q})\longrightarrow \C.\]
\end{itemize}

\begin{dfn}
We define the {\it arithmetic degree} of $f$ at $P\in X(\var{\Q})$ by
\[
\alpha_f(P):=\lim_{n\to\infty} \max\{h_H(f^n(P)), 1\}^{1/n}.
\]
\end{dfn}
\begin{rmk}\label{Remark: arithmetic degree}
Kawaguchi and Silverman proved the existence of the arithmetic degree $\alpha_f(P)$;
see \cite[Theorem 3]{KS3}.
It is also known that $\alpha_f(P)$
does not depend on the choice of $H$ and of $h_H$;
see \cite[Proposition 12]{KS2}.
\end{rmk}
\begin{dfn}
For a linear self-map $\vphi \colon V\longrightarrow V$
on a finite dimensional vector space $V$ over a subfield $K$ of $\C$,
the set of all the eigenvalues of $\vphi$ on $V\otimes_K \C$ is denoted by $\EV(\vphi;V)$.
For a real number $B\in \R$, we define
\[
\EV(\vphi,B;V):=\left\{\lambda \in \EV(\vphi ; V)\ \middle|\ |\lambda|\geq B\right\}.
\]
\end{dfn}

\begin{dfn}
We say a sequence $(P_n)_{n\in\Z}\in X(\var{\Q})^{\Z}$ is an {\it $f$-orbit} if it satisfies
$f( P_n)=P_{n+1}$
for all $n\in\Z$.
An {\it $f$-orbit} of $P\in X(\var{\Q})$ is
an $f$-orbit satisfying $P_0=P$.
Let $\plim[f] X(\var{\Q})$ be the set of all $f$-orbits.
For each integer $m\in\Z$,
let
\[
\p_m\colon \plim[f] X(\var{\Q})\longrightarrow X(\var{\Q}), \quad (P_n)_{n\in\Z}\mapsto P_m
\]
be the projection to the $m$-th component.
For an $f$-orbit $(P_n)_{n\in\Z}$,
let $f((P_n)_{n\in\Z})$ be the $f$-orbit
whose $m$-th component is $P_{m+1}$ for all $m\in\Z$.
Then a self-map
\[
f\colon \plim[f] X(\var{\Q})\longrightarrow \plim[f] X(\var{\Q})
\]
is defined and we regard it as the left shift operator.
We also define the right shift operator
\[
R\colon \plim[f] X(\var{\Q}) \longrightarrow \plim[f] X(\var{\Q})
\]
such that $\p_m \circ R((P_n))=P_{m-1}$ for all $m \in \Z$.
\end{dfn}

\begin{dfn}\label{dfn: Jordan block}
For a non-negative integer $l\geq 0$ and a complex number $\lambda\in \C$,
let
\[\Lambda:= \left(
\begin{array}{ccccc}
\lambda & & & & \\
1 &\lambda & &O & \\
 &1 &\ddots & & \\
O & &\ddots &\lambda & \\
 & & &1 &\lambda
\end{array}
\right)
\]
be the Jordan block matrix of the size $(l+1)\times (l+1)$.
\end{dfn}

\begin{dfn}
The symbol $\|\cdot \|$ denotes the sup norm of a (column) vector or a matrix of complex numbers,
i.e., for vectors $v={}^t(a_0,\ldots ,a_l)\in\C^{l+1}$ and matrices $A=(a_{i,j})_{0\leq i,j\leq l}$ with complex coordinates, we set
\[
\| v \|:=\max_{0\leq i\leq l} |a_i| \quad \text{and}\quad \|A\|:=\max_{0\leq i,j\leq l}|a_{i,j}|.
\]
\end{dfn}
\begin{rmk}
For a (column) vector $v\in\C^{l+1}$ and a square matrix $A$ of size $(l+1)\times (l+1)$, we have
\[
\|Av\|\leq (l+1) \cdot \|A\|\cdot \|v\|.
\]
We frequently use this inequality.
\end{rmk}

\section{The canonical heights for Jordan blocks}\label{Section: The canonical heights}
In this section, we shall prove Theorem \ref{Theorem: canonical heights}.
We shall introduce the canonical heights
for Jordan blocks whose complex absolute values are different from $0$ and $1$.
Our canonical heights are generalizations of the canonical heights introduced by Kawaguchi and Silverman in \cite{KS3} for eigenvalues
whose complex absolute values are greater than $1$.

In the following, let $\Lambda$ be the Jordan block matrix
of the size $(l+1)\times (l+1)$ as in Definition \ref{dfn: Jordan block}.
\begin{lem}\label{Lemma: matrix}
\begin{parts}
\Part{(a)}
For all $n\geq 1$, we have $||\Lambda^n|| \leq n^l \max\{|\lambda|,1\}^n$.
\Part{(b)}
If $|\lambda|<1$, for all $n\geq l$,
we have $||\Lambda^n||\leq n^l\cdot |\lambda|^{n-l}$.
\Part{(c)}
If $|\lambda|<1$, for any vector $v\in \C^{l+1}$, we have
\[
\lim_{n\to\infty}\Lambda^n v=0.
\]
\Part{(d)}If $\lambda\neq 0$, for any nonzero vector
$v\in \C^{l+1}\setminus \{0\}$, we have
\[
\lim_{n\to\infty} ||\Lambda^{n} v||^{1/n}=|\lambda|.
\]
\Part{(e)}If $\lambda\neq 0$, for any nonzero vector
$v\in \C^{l+1}\setminus \{0\}$, we have
\[
\lim_{n\to\infty} ||\Lambda^{-n} v||^{1/n}=|\lambda|^{-1}.
\]
\end{parts}
\end{lem}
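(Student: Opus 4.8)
The plan is to prove each part by explicit computation with the Jordan block matrix $\Lambda$, since everything here is purely linear algebra about the single matrix $\Lambda = \lambda I + N$, where $N$ is the nilpotent matrix with $1$'s just below the diagonal and $N^{l+1}=0$. For part (a), I would expand $\Lambda^n = (\lambda I + N)^n = \sum_{k=0}^{\min(n,l)} \binom{n}{k}\lambda^{n-k} N^k$ using the binomial theorem (valid since $\lambda I$ and $N$ commute), and then estimate entrywise: each $N^k$ has sup norm $1$, there are at most $l+1$ terms, and $\binom{n}{k}\le n^k \le n^l$ while $|\lambda|^{n-k}\le \max\{|\lambda|,1\}^n$ (using $|\lambda|^{-k}\le 1$ when $|\lambda|\ge 1$, and $|\lambda|^{n-k}\le 1 \le \max\{\cdots\}^n$ when $|\lambda|<1$). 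A bit of care with the constant $l+1$ versus the claimed bound $n^l$ is needed, but for $n\ge 1$ one absorbs it; I would be slightly careful that the statement as written has no extra factor, so the binomial bound $\binom{n}{k}\le n^l$ must be applied so that the sum of $\le l+1$ terms still fits — most cleanly by noting $\sum_k \binom{n}{k}\le 2^n$ is too weak, so instead bound the largest single term and count terms, checking $n\ge 1$ suffices. Part (b) is the same expansion under the hypothesis $|\lambda|<1$: now for $n\ge l$ every exponent $n-k\ge n-l\ge 0$, and since $|\lambda|<1$ we have $|\lambda|^{n-k}\le |\lambda|^{n-l}$, giving $\|\Lambda^n\|\le (\text{number of terms})\cdot n^l\cdot|\lambda|^{n-l}$, and again I must check the leading constant collapses into $n^l$ for $n\ge l$.

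For part (c), with $|\lambda|<1$, apply (b): $\|\Lambda^n v\|\le (l+1)\|\Lambda^n\|\,\|v\|\le (l+1)\,n^l |\lambda|^{n-l}\|v\|\to 0$ because exponential decay beats polynomial growth. For part (d), the upper bound $\limsup_n \|\Lambda^n v\|^{1/n}\le |\lambda|$ follows from (a) (when $|\lambda|\ge 1$) and (b) (when $|\lambda|<1$) by taking $n$-th roots and using $\lim_n (n^l)^{1/n}=1$; the lower bound is the point requiring a genuine argument. Here I would pick the smallest index $i$ with $v_i\neq 0$ (writing $v={}^t(v_0,\dots,v_l)$), observe that $(\Lambda^n v)_i = \lambda^n v_i$ since $N$ shifts downward and cannot reach the $i$-th coordinate from a nonzero entry of higher index — wait, more precisely: taking $i$ to be the \emph{largest} index with $v_i\neq 0$, the $i$-th component of $N^k v$ vanishes for $k\ge 1$ (as $N$ moves mass from coordinate $j$ to coordinate $j+1$, so the $i$-th output coordinate of $N^k$ only sees input coordinates $i-k < i$, which contribute nothing beyond... ) — let me instead use: the bottom-most nonzero structure. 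Concretely, if $i$ is the largest index with $v_i\neq0$, then $(\Lambda^n v)_i=\lambda^n v_i$, so $\|\Lambda^n v\|\ge |\lambda|^n|v_i|$ and hence $\liminf_n\|\Lambda^n v\|^{1/n}\ge|\lambda|$. Combined with the upper bound, the limit is $|\lambda|$. Finally, part (e) follows from part (d) applied to the matrix $\Lambda^{-1}$: since $\lambda\neq 0$, $\Lambda$ is invertible, and $\Lambda^{-1}=\lambda^{-1}(I+\lambda^{-1}N)^{-1}=\lambda^{-1}\sum_{k=0}^{l}(-\lambda^{-1}N)^k$ is of the form (scalar $\lambda^{-1}$) times (unipotent), i.e. it is conjugate to — or rather, its only eigenvalue is $\lambda^{-1}$; so by the same reasoning as (d), possibly after noting that (d)'s proof only used that $\Lambda$ has single eigenvalue $\lambda$ and bounded-degree nilpotent part, we get $\lim_n\|\Lambda^{-n}v\|^{1/n}=|\lambda^{-1}|=|\lambda|^{-1}$.

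The main obstacle is part (d)'s lower bound and making part (e) rest on it cleanly: for (e) the cleanest route is to prove (d) in the slightly more general form "for any invertible matrix $A$ whose characteristic polynomial is $(T-\lambda)^{l+1}$, $\lim\|A^nv\|^{1/n}=|\lambda|$ for $v\neq0$," or equivalently to record that $\Lambda^{-1}$ is again (a scalar times a lower-triangular unipotent), extract its largest-index-coordinate behavior exactly as in (d), and conclude. I should double-check the direction of the shift in $N$ so that "largest index with $v_i\neq 0$" is indeed the coordinate where $\Lambda^n v$ equals $\lambda^n v_i$ exactly — with $N$ having $1$'s on the subdiagonal, $(Nv)_i = v_{i-1}$, so $(N^kv)_i=v_{i-k}$, and for $i$ maximal with $v_i\neq0$ this is fine, but I actually want the coordinate that is \emph{untouched}, which is the \emph{smallest} index $i$ with $v_i\neq 0$: then $(N^k v)_i = v_{i-k}=0$ for all $k\ge1$, so $(\Lambda^n v)_i=\binom{n}{0}\lambda^n v_i=\lambda^n v_i$. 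That gives the clean lower bound $\|\Lambda^nv\|\ge|\lambda|^n|v_i|$, and symmetrically for $\Lambda^{-1}$. Everything else is routine estimation with binomial coefficients and the limit $(n^l)^{1/n}\to1$.
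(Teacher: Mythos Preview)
Your proposal is correct and essentially matches the paper's argument, which for parts (a) and (d) simply cites \cite[Lemma 12]{KS3}, derives (b) ``by the same arguments as (a)'', and gets (c) from (b). Your explicit binomial expansion is exactly what underlies those citations. One clarification that removes your worry about the stray factor $l+1$ in (a) and (b): since $(N^k)_{i,j}=\delta_{i,j+k}$, each entry of $\Lambda^n$ is a \emph{single} term $\binom{n}{k}\lambda^{n-k}$ (with $k=i-j$) rather than a sum, so $\|\Lambda^n\|=\max_{0\le k\le\min(n,l)}\binom{n}{k}|\lambda|^{n-k}$ and the bounds $n^l\max\{|\lambda|,1\}^n$ and $n^l|\lambda|^{n-l}$ follow directly with no extra constant. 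Your lower bound in (d) via the smallest index $i$ with $v_i\neq 0$ is exactly right once you fix the direction of $N$, as you do at the end.

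The only genuine difference is in (e). The paper conjugates $\Lambda^{-1}$ to its Jordan normal form $\lambda^{-1}I+N$ via some invertible $U$, then uses that any two norms on $\C^{l+1}$ are equivalent to transfer (d) through the conjugation:
\[
\lim_{n\to\infty}\|\Lambda^{-n}v\|^{1/n}=\lim_{n\to\infty}\|U^{-1}(\lambda^{-1}I+N)^n Uv\|^{1/n}=\lim_{n\to\infty}\|(\lambda^{-1}I+N)^n(Uv)\|^{1/n}=|\lambda|^{-1}.
\]
Your route---observing that $\Lambda^{-1}=\lambda^{-1}\sum_{k=0}^l(-\lambda^{-1}N)^k$ is again lower-triangular with sole eigenvalue $\lambda^{-1}$ and rerunning the (d) argument directly---also works and avoids invoking norm equivalence, at the cost of redoing the upper-bound estimate for a matrix that is no longer exactly a Jordan block. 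The paper's version is slightly cleaner because it reduces (e) literally to (d) rather than to a mild generalization of it.
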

\begin{proof}
\begin{parts}
\Part{(a)}
See \cite[Lemma 12]{KS3}.
\Part{(b)} This follows from the same arguments as (a).
\Part{(c)} The assertion follows from (b).
\Part{(d)} See \cite[Lemma 12]{KS3}.
\Part{(e)}

Since the Jordan normal form of $\Lambda^{-1}$ is
$\lambda^{-1}I+N$ with a nilpotent matrix $N$,
there is an invertible matrix $U$ such that
\[
U\Lambda^{-1} U^{-1}=\lambda^{-1}I+N.
\]
Since any two norms on $\C^{l+1}$ are equivalent to each other,
there are positive real numbers $C, C'\in \R_{>0}$ such that for all $v\in\C^{l+1}$,
the following inequalities hold
\[
C||v||\leq ||U^{-1}v||\leq C'||v||.
\]
Combining these inequalities with $(d)$, we get
\begin{align}
\lim_{n\to\infty}||\Lambda^{-n}v||^{1/n}
&= \lim_{n\to\infty} ||U^{-1}(\lambda^{-1}I+N)^nUv||^{1/n}\\
&= \lim_{n\to\infty} ||(\lambda^{-1}I+N)^n(Uv)||^{1/n}\\
&= |\lambda|^{-1}.
\end{align}
\end{parts}
\end{proof}

\begin{prop}[The axiomatic canonical heights for Jordan blocks]
\label{Proposition: axiomatic canonical heights}
Let $\lambda\in\C$ be a complex number satisfying $|\lambda |\neq 0,1$.
Let $S$ be a set with a self-bijection $R\colon S\longrightarrow S$,
and $h\colon S\longrightarrow \C^{l+1}$ a vector valued function satisfying
\[
||h\circ R-\Lambda^{-1} h||=O(1).
\]
Then there is a unique function
$\widehat{h}\colon S\longrightarrow\C^{l+1}$
satisfying the functional equation
\[
\widehat{h}\circ R=\Lambda^{-1} \widehat{h}
\]
and the normalization condition
\[
\widehat{h}= h+O(1).
\]
\end{prop}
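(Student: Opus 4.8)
The plan is to reduce the statement to a single ``contracting'' case and then run a Tate-style telescoping limit. First I would note that, since $|\lambda|\neq 0,1$, exactly one of $\Lambda$, $\Lambda^{-1}$ has spectral radius strictly less than $1$; write $M$ for that matrix and set $T:=R$ if $M=\Lambda$ (i.e.\ $|\lambda|<1$) and $T:=R^{-1}$ if $M=\Lambda^{-1}$ (i.e.\ $|\lambda|>1$). Rewriting the hypothesis $h\circ R-\Lambda^{-1}h=O(1)$, one checks that in both cases $h\circ T-M^{-1}h=O(1)$ (when $|\lambda|>1$ this is just rewriting $h\circ R=\Lambda^{-1}h+O(1)$ as $h\circ R^{-1}=\Lambda h+O(1)$), and that the sought functional equation $\widehat h\circ R=\Lambda^{-1}\widehat h$ is equivalent to $\widehat h\circ T=M^{-1}\widehat h$. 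So it suffices to prove: for a self-bijection $T$ of $S$ and a square matrix $M$ of spectral radius $<1$ with $\|h\circ T-M^{-1}h\|=O(1)$, there is a unique $\widehat h\colon S\to\C^{l+1}$ with $\widehat h\circ T=M^{-1}\widehat h$ and $\widehat h=h+O(1)$.

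The one quantitative input is that $\|M^{k}\|\to 0$ geometrically, i.e.\ $\|M^{k}\|\le C_{0}\rho^{k}$ for some $C_{0}>0$, $\rho\in(0,1)$ and all $k\ge 0$; for $M=\Lambda$ this is Lemma \ref{Lemma: matrix}(b), and for $M=\Lambda^{-1}$ it follows by passing to the Jordan form of $M$ and invoking Lemma \ref{Lemma: matrix}(b) there, exactly as in the proof of Lemma \ref{Lemma: matrix}(e). Granting this, put $\varepsilon:=h\circ T-M^{-1}h$ (bounded, say $\|\varepsilon\|\le C$). A one-line induction starting from $M(h\circ T)=h+M\varepsilon$ gives the telescoping identity
\[
M^{n}(h\circ T^{n})=h+\sum_{k=0}^{n-1}M^{k+1}(\varepsilon\circ T^{k}),
\]
whose right-hand side converges absolutely and uniformly on $S$ because $\|M^{k+1}(\varepsilon\circ T^{k})\|\le(l+1)\,\|M^{k+1}\|\,C\le(l+1)C_{0}C\,\rho^{k+1}$. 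I would then define $\widehat h:=\lim_{n\to\infty}M^{n}(h\circ T^{n})$. The normalization $\widehat h=h+O(1)$ is immediate from the series, and the functional equation follows by applying $T$ inside the limit and extracting one factor of $M^{-1}$, using that $v\mapsto M^{-1}v$ is continuous: $\widehat h\circ T=\lim_{n}M^{n}(h\circ T^{n+1})=M^{-1}\lim_{n}M^{n+1}(h\circ T^{n+1})=M^{-1}\widehat h$.

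For uniqueness, if $\widehat h_{1}$ and $\widehat h_{2}$ both satisfy the conclusion, then $g:=\widehat h_{1}-\widehat h_{2}$ is bounded and satisfies $g\circ T=M^{-1}g$, hence $g=M^{n}(g\circ T^{n})$ for all $n\ge 0$; then $\|g(s)\|\le(l+1)\|M^{n}\|\sup_{t\in S}\|g(t)\|\le(l+1)C_{0}\rho^{n}\sup_{S}\|g\|\to 0$, so $g\equiv 0$. I do not expect any genuinely hard step here: the core is the Tate limit argument, and the only things that need care are (i) the bookkeeping that collapses the two regimes $|\lambda|>1$ and $|\lambda|<1$ into the single ``$M$ contracts'' statement — in particular choosing to iterate $h\circ T^{n}$ in the direction in which $M$ acts as a contraction — and (ii) the harmless point that $\Lambda^{-1}$ is not itself a Jordan block, so the geometric decay $\|\Lambda^{-k}\|\to 0$ must be routed through a change of basis (already carried out in Lemma \ref{Lemma: matrix}(e)).
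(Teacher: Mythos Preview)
Your proof is correct and follows essentially the same Tate-limit approach as the paper: the paper likewise reduces to one case (treating $0<|\lambda|<1$ and declaring the other case analogous with $R^{-1}$), sets $E=h\circ R-\Lambda^{-1}h$, and defines $\widehat h=h+\sum_{n\ge 0}\Lambda^{n+1}(E\circ R^n)$, which is exactly your limit $\lim_n M^n(h\circ T^n)$ via your telescoping identity. The only cosmetic difference is in the uniqueness step, where the paper invokes Lemma \ref{Lemma: matrix}(e) to get the contradiction $1\ge\lim\|\Lambda^{-n}g(x)\|^{1/n}=|\lambda|^{-1}$, whereas your direct bound $\|g\|\le(l+1)\|M^n\|\sup_S\|g\|\to 0$ is a touch more elementary.
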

\begin{proof}
We shall give a proof of the assertion for $\lambda$ satisfying $0<|\lambda|<1$.
If $1<|\lambda|$,
we can prove it similarly using the inverse map of $R$ instead of $R$.
See also \cite[Theorem 13]{KS3}.

First, we shall define $\widehat{h}$.
Let
\[
E := h\circ R-\Lambda^{-1}h
\]
be the error function.
There is a constant $C_0>0$ satisfying $\| E(x)\| <C_0$ for any $x\in S$.
We define
\[
\widehat{h}:=h+\sum_{n=0}^{\infty} \Lambda^{n+1}(E\circ R^n).
\]
To prove that it is well-defined and satisfies the normalization condition,
it suffices to prove that the series
\[
\sum_{n=0}^{\infty} ||\Lambda^{n+1}(E\circ R^n(x))||
\]
converges and
is bounded by a constant which is independent of $x\in S$.
These assertions follow from the following calculation.
\begin{align}
&\phantom{\leq} \sum_{n=0}^{\infty} ||\Lambda^{n+1}(E\circ R^n(x))||\\
&\leq \sum_{n=0}^{\infty} (l+1)\cdot||\Lambda^{n+1}||\cdot ||E\circ R^n(x)||\\
&= \sum_{n=0}^{l-1} (l+1)\cdot||\Lambda^{n+1}||\cdot ||E\circ R^n(x)||
+\sum_{n=l}^{\infty} (l+1)\cdot||\Lambda^{n+1}||\cdot ||E\circ R^n(x)||\\
&\leq C_1 + C_0\cdot \sum_{n=l}^{\infty} (l+1)\cdot n^l \cdot |\lambda|^{n-l}
\text{\hspace{30mm}from Lemma \ref{Lemma: matrix} (b)}\\
&\leq C_2,
\end{align}
where $C_1$ and $C_2$ are constants independent of $x\in S$.

Next, we shall prove that $\widehat{h}$ satisfies the functional equation.
It follows from the following formal calculation.
\begin{align}
\widehat{h}\circ R
&= h\circ R + \sum_{n=0}^{\infty}\Lambda^{n+1}(E\circ R^{n+1})\\
&= h\circ R - E +\sum_{n=0}^{\infty}\Lambda^{n} E\circ R^{n}\\
&= \Lambda^{-1} h+\Lambda^{-1} \sum_{n=0}^{\infty}\Lambda^{n+1} E\circ R^{n}\\
&= \Lambda^{-1} \widehat{h}.
\end{align}

Finally we shall prove the uniqueness of $\widehat{h}$.
Let both $\widehat{h}$ and $\widehat{h}'$ be vector valued functions
satisfying the functional equation and the normalization condition.
We set $g:=\widehat{h}-\widehat{h}'$.
Then $g$ is a bounded function satisfying
the functional equation $g\circ R=\Lambda^{-1} g$.
Assume that $g(x)\neq 0$ for some $x\in S$.
Then we get
\begin{equation}
1\geq \lim_{n\to\infty}||g\circ R^n(x)||^{1/n}=||\Lambda^{-n}g(x)||^{1/n}=|\lambda|^{-1},
\end{equation}
where the first inequality follows from the boundedness of $g$,
and the last equality follows from Lemma \ref{Lemma: matrix} (e).
This contradicts the assumption $0<|\lambda|<1$.
Consequently, we have $g(x)=0$ for any $x\in S$.
\end{proof}

\begin{proof}[{Proof of Theorem \ref{Theorem: canonical heights}}]
If $0<|\lambda|<1$ (resp.\ $1<|\lambda|$), the assertion follows
by applying Proposition \ref{Proposition: axiomatic canonical heights}
to the set $\plim[f] X(\var{\Q})$, the vector valued height function
\[
h_D:={}^t(h_{D_0}, h_{D_1}, \ldots ,h_{D_l}) \circ \p_0,\]
and the right shift operator
\[
R\colon\plim[f] X(\var{\Q})\longrightarrow \plim[f] X(\var{\Q})
\]
(resp.\ left shift operator).
\end{proof}

\begin{prop}\label{Proposition: boundedness}
Let notation be the same as in Theorem \ref{Theorem: canonical heights}.
Moreover, assume that $0< |\lambda | <1$.
Then for every $j\ (0\leq j \leq l)$ and for every point $P\in X(\var{\Q})$,
the sequence $\{ h_{D_j}(f^n(P)) \}_{n\geq 0}$
is bounded.
\end{prop}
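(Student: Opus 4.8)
The plan is to reduce the assertion to the behaviour of the powers $\Lambda^n$ and then quote Lemma \ref{Lemma: matrix}.

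First I would turn the linear equivalences $f^\ast D_j \sim D_{j-1} + \lambda D_j$ (with $D_{-1}=0$) into height estimates. By functoriality of the height machine together with its $\C$-linear extension to $\Div(X)_\C$ (see \cite[Theorem B.3.2]{HS}), one has, for $0\le j\le l$,
\[
h_{D_j}\circ f = h_{f^\ast D_j}+O(1) = h_{D_{j-1}}+\lambda\,h_{D_j}+O(1),
\]
with $h_{D_{-1}}=0$. Assembling the column vector $\mathbf{h}:={}^t(h_{D_0},\ldots,h_{D_l})$ and recalling the shape of the Jordan block $\Lambda$ from Definition \ref{dfn: Jordan block}, these identities say precisely that
\[
\mathbf{h}\circ f = \Lambda\,\mathbf{h}+E,
\]
where $E\colon X(\var{\Q})\to \C^{l+1}$ satisfies $\|E\|\le C_0$ for some constant $C_0>0$.

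Next I would iterate. Fix $P\in X(\var{\Q})$. An easy induction on $n\ge 0$ gives
\[
\mathbf{h}(f^n(P)) = \Lambda^n\,\mathbf{h}(P) + \sum_{k=0}^{n-1}\Lambda^{\,n-1-k}E(f^k(P)).
\]
Taking sup norms and using $\|Av\|\le (l+1)\|A\|\,\|v\|$ we obtain
\[
\|\mathbf{h}(f^n(P))\| \le (l+1)\,\|\Lambda^n\|\cdot\|\mathbf{h}(P)\| + (l+1)\,C_0\sum_{m=0}^{n-1}\|\Lambda^m\|.
\]
Here the hypothesis $0<|\lambda|<1$ enters: by Lemma \ref{Lemma: matrix} (b) one has $\|\Lambda^m\|\le m^l|\lambda|^{m-l}$ for $m\ge l$, so the series $\sum_{m=0}^{\infty}\|\Lambda^m\|$ converges, and in particular $\sup_{n\ge 0}\|\Lambda^n\|<\infty$ (alternatively this is immediate from Lemma \ref{Lemma: matrix} (c)). Hence the right-hand side above is bounded by a constant depending only on $P$ (through $\|\mathbf{h}(P)\|$), on $C_0$, and on $\Lambda$, but not on $n$. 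Since $|h_{D_j}(f^n(P))|\le \|\mathbf{h}(f^n(P))\|$ for every $j$, each sequence $\{h_{D_j}(f^n(P))\}_{n\ge 0}$ is bounded, as claimed.

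As an alternative one can invoke Theorem \ref{Theorem: canonical heights} directly: since $f$ is surjective and $\var{\Q}$ is algebraically closed, the map $\pr_0\colon \plim[f] X(\var{\Q})\to X(\var{\Q})$ is surjective, so we pick an $f$-orbit $\mathbf{P}$ with $\pr_0(\mathbf{P})=P$. Iterating the functional equation yields ${}^t(\widehat{h}_{D_0},\ldots,\widehat{h}_{D_l})(f^n(\mathbf{P})) = \Lambda^n\,{}^t(\widehat{h}_{D_0},\ldots,\widehat{h}_{D_l})(\mathbf{P}) \to 0$ by Lemma \ref{Lemma: matrix} (c); since $\pr_0(f^n(\mathbf{P}))=f^n(P)$ for $n\ge 0$, the normalization condition $\widehat{h}_{D_j}=h_{D_j}\circ\pr_0+O(1)$ then forces $\{h_{D_j}(f^n(P))\}_{n\ge 0}$ to be bounded. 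I do not expect a genuine obstacle: the only point demanding care is bookkeeping which constants are uniform in $n$ and which merely depend on $P$ (namely the $\Lambda^n\mathbf{h}(P)$ term), together with the convergence of $\sum_m\|\Lambda^m\|$, which is exactly where $|\lambda|<1$ is used.
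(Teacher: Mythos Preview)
Your proof is correct. Your primary argument is the direct, elementary one: from $\mathbf{h}\circ f=\Lambda\,\mathbf{h}+E$ you iterate and bound by $\sum_m\|\Lambda^m\|<\infty$. The paper instead proves this via the canonical heights of Theorem \ref{Theorem: canonical heights}: it picks an $f$-orbit $(P_n)$ through $P$, uses the normalization $\widehat{h}_D=h_D\circ\pr_0+O(1)$ and the functional equation to obtain $\|h_D(f^m(P))\|\le (l+1)\|\Lambda^m\|\cdot\|\widehat{h}_D((P_n))\|+C$, and then appeals to Lemma \ref{Lemma: matrix}(b). This is exactly the ``alternative'' you sketch at the end. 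The paper even remarks immediately after the proof that the direct route you give is possible, but opts for the canonical-height proof as an illustration of those objects. What your approach buys is independence from Theorem \ref{Theorem: canonical heights} (in particular no need for $\plim[f]X(\var{\Q})$ or the surjectivity of $\pr_0$); what the paper's approach buys is a cleaner one-line estimate once the machinery is in place, with the telescoping sum already absorbed into the construction of $\widehat{h}_D$.
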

\begin{proof}
For a point $P\in X(\var{\Q})$, take an $f$-orbit $(P_n)_{n\in\Z}$ of $P$.
Let
\[
h_D:={}^t(h_{D_0},h_{D_1},\ldots ,h_{D_l})
\]
be the vector valued height function.
Take the canonical height function $\widehat{h}_{D_j}$
as in Theorem \ref{Theorem: canonical heights}
and let
\[
\widehat{h}_D:={}^t(\widehat{h}_{D_0},\widehat{h}_{D_1},\ldots ,\widehat{h}_{D_l})
\]
be the vector valued canonical height function.
There is a real number $C\in \R_{>0}$
such that for every $f$-orbit $(P_n)_{n\in\Z}$ and every $m\geq 0$,
we have
\begin{align}
\hphantom{=} ||h_D(f^m(P))||
&= ||h_D\circ\p_0(f^m((P_n)))||\\
&\leq ||\widehat{h}_D(f^m((P_n)))||+C\\
&= ||\Lambda^{m}\widehat{h}_D((P_n))||+C\\
&\leq (l+1)\cdot ||\Lambda^{m}||\cdot||\widehat{h}_D((P_n))||+C.
\end{align}
When $m$ goes to $\infty$,
the last term converges to $C$ by Lemma \ref{Lemma: matrix} (b).
Hence the assertion follows.
\end{proof}
\begin{rmk}
It is possible to prove Proposition \ref{Proposition: boundedness}
directly without using Theorem \ref{Theorem: canonical heights}.
But the canonical heights for Jordan blocks are interesting themselves,
and they make the proof clearer.
\end{rmk}

\section{Proof of Theorem \ref{Theorem: preperiodicity}}
\label{Section: On the main theorem}

In this section, we shall prove Theorem \ref{Theorem: preperiodicity}.

Before giving the proof, we give easy lemmata in linear algebra
which we frequently use in the proof.

\begin{lem}
Let $U,V$ be finite dimensional vector spaces over a field, and
$\vphi_U\colon U\longrightarrow U$, $\vphi_V\colon V\longrightarrow V$ be linear self-maps on $U, V$, respectively.
\begin{parts}
\Part{(a)}If there is an injection $\iota\colon V\longrightarrow U$
satisfying $\iota\circ \vphi_V=\vphi_U\circ \iota$,
then we have $\EV(\vphi_V)\subset\EV(\vphi_U)$.
\Part{(b)}Let $\pi\colon V\longrightarrow U$ be a surjection
satisfying $\pi\circ \vphi_V=\vphi_U\circ \pi$.
Then $\EV(\vphi_U)$ coincides with the set
\[
\left\{\lambda\in\EV(\vphi_V)\ \middle|\ 
\exists v\in V \text{ s.t. }
\pi v\neq 0 \text{ and }
\vphi_V v=\lambda v
\right\}.
\]
\Part{(c)}Let notation be as in {\rm (b)}.
Let $\lambda\in \EV(\vphi_V)\backslash \EV(\vphi_U)$,
and let $v_0, v_1, \ldots , v_r\in V$ satisfy
\[
\vphi_V v_j = v_{j-1}+\lambda v_j \quad (0\leq j\leq r),
\]
where we set $v_{-1}:=0$.
Then we have $\pi v_j=0$ for every $j\ (0\leq j \leq r)$.
\end{parts}
\end{lem}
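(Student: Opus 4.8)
The plan is to prove all three parts after extending scalars to $\C$: I work with $\vphi_U\otimes\id_\C$ on $U\otimes_K\C$ and $\vphi_V\otimes\id_\C$ on $V\otimes_K\C$ (the vectors $v,v_j$ in the statements, being paired with the complex scalar $\lambda$, are read in these complexifications), noting that $\iota$ stays injective, $\pi$ stays surjective, and that $\lambda\in\EV(\vphi)$ precisely when $\vphi-\lambda\,\id$ is not injective on the complexification. Part (a) is then immediate: given $\lambda\in\EV(\vphi_V)$, choose $v\neq 0$ with $\vphi_V v=\lambda v$; then $\iota v\neq 0$ and $\vphi_U(\iota v)=\iota(\vphi_V v)=\lambda\,\iota v$, so $\lambda\in\EV(\vphi_U)$ (equivalently, $\iota$ realizes $V$ as a $\vphi_U$-invariant subspace on which $\vphi_U$ is conjugate to $\vphi_V$, so the characteristic polynomial of $\vphi_V$ divides that of $\vphi_U$). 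Part (c) is also immediate: applying $\pi$ to $\vphi_V v_j=v_{j-1}+\lambda v_j$ and using $\pi\vphi_V=\vphi_U\pi$ gives
\[
(\vphi_U-\lambda\,\id)(\pi v_j)=\pi v_{j-1}\quad(0\le j\le r),\qquad \pi v_{-1}=0;
\]
since $\lambda\notin\EV(\vphi_U)$ the operator $\vphi_U-\lambda\,\id$ is injective, so $\pi v_0=0$, and by induction on $j$ one gets $\pi v_j=0$ for every $j$.

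For part (b), the inclusion ``$\supseteq$'' is the computation already used in (a): if $\vphi_V v=\lambda v$ with $\pi v\neq 0$, then $\pi v$ is a $\lambda$-eigenvector of $\vphi_U$. For ``$\subseteq$'' the plan is to pass to the quotient. Since $\pi\vphi_V=\vphi_U\pi$, the subspace $\Ker\pi$ is $\vphi_V$-invariant and $\pi$ identifies $U$ with $V/\Ker\pi$, with $\vphi_U$ corresponding to the operator induced by $\vphi_V$ on the quotient. Decomposing $V=\bigoplus_\mu V_\mu$ into the generalized eigenspaces of $\vphi_V$ and using that the projections onto the generalized eigenspaces are polynomials in $\vphi_V$, the invariant subspace $\Ker\pi$ splits as $\bigoplus_\mu(\Ker\pi\cap V_\mu)$; hence $V/\Ker\pi=\bigoplus_\mu V_\mu/(\Ker\pi\cap V_\mu)$ realizes the generalized eigenspace decomposition of $\vphi_U$, and $\lambda\in\EV(\vphi_U)$ if and only if $V_\lambda\not\subseteq\Ker\pi$, i.e.\ if and only if some generalized $\lambda$-eigenvector of $\vphi_V$ has nonzero image under $\pi$. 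It then remains to pass from such a generalized eigenvector to a genuine one: inside $V_\lambda$, where $N:=\vphi_V-\lambda\,\id$ is nilpotent and $W:=\Ker\pi\cap V_\lambda$ is a proper $N$-invariant subspace, one wants a vector in $(\Ker N)\setminus W$.

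I expect this last step of part (b) to be the main obstacle; parts (a) and (c) and the easy inclusion in (b) cost essentially nothing. The subtle issue is the distinction between eigenvectors and generalized eigenvectors --- a genuine $\lambda$-eigenvector need not survive modulo $\Ker\pi$ even when $V_\lambda\not\subseteq\Ker\pi$ --- so I would examine the action of $N$ on $V_\lambda/W$ carefully before relying on the statement exactly as written; in any case, the robust assertion, and the one that parts (a)--(c) are really meant to supply, is the equivalence $\lambda\in\EV(\vphi_U)\iff V_\lambda\not\subseteq\Ker\pi$.
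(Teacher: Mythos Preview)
Your arguments for (a), for (c), and for the inclusion $\supseteq$ in (b) are correct and match the paper's approach: the paper simply declares (a) and (b) ``obvious'' and proves (c) by citing (b) for the base case $\pi v_0=0$ and then inducting, which is exactly your injectivity argument for $\vphi_U-\lambda\,\id$ spelled out.

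Your hesitation about the inclusion $\subseteq$ in (b) is justified, and in fact (b) is false as stated. The situation you describe at the end already gives a counterexample: take $V=\C^2$ with $\vphi_V$ a single $2\times 2$ Jordan block of eigenvalue $\lambda$, let $\pi$ be projection onto the second coordinate (so $\Ker\pi$ is the eigenline and is $\vphi_V$-invariant), and let $\vphi_U$ be the induced map, multiplication by $\lambda$ on $U\cong\C$. Then $\lambda\in\EV(\vphi_U)$, yet every $\lambda$-eigenvector of $\vphi_V$ lies in $\Ker\pi$, so the set on the right-hand side of (b) is empty. The paper's one-word proof overlooks this. The corrected assertion is the one you formulate, namely $\lambda\in\EV(\vphi_U)\iff V_\lambda\not\subseteq\Ker\pi$ (generalized eigenspace), and that is precisely what your quotient-space argument establishes. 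Fortunately only the valid direction $\supseteq$ of (b) is ever invoked in the paper (for the base case of (c)), and your proof of (c) avoids (b) entirely, so nothing downstream is affected.
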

\begin{proof}
\begin{parts}
\Part{(a),(b)} The assertions are obvious.
\Part{(c)} The equality $\pi v_0=0$ follows from (b).
It is easy to see $\pi v_j=0$ by induction.
\end{parts}
\end{proof}
\begin{lem}\label{Lemma: annihilate}
There is a monic integral polynomial $P_f(t)\in\Z[t]$
such that $P_f(f^\ast)$ annihilates $\Pic(X)$.
\end{lem}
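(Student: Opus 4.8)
The plan is to reduce the statement to a finiteness property of $\Pic(X)$ together with the fact that the characteristic polynomial of an integral matrix is monic with integer coefficients. First I would recall that $X$ is a smooth projective variety over $\var{\Q}$, so its Picard group sits in a short exact sequence
\[
0 \longrightarrow \Pic^0(X) \longrightarrow \Pic(X) \longrightarrow \NS(X) \longrightarrow 0,
\]
where $\NS(X)$ is finitely generated. Since $\Pic^0(X)$ is divisible while the whole question of annihilating $\Pic(X)$ by an integral polynomial in $f^\ast$ could be spoiled by the torsion-free part of $\NS(X)$ only — indeed a divisible group is killed by no nonzero integer, so one must be careful: the correct statement is that $P_f(f^\ast)$ annihilates $\Pic(X)$ \emph{modulo} the subtleties coming from $\Pic^0$. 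Let me reconsider: the cleanest route is to note that $f^\ast$ acts on $\Pic^0(X)$, which is an abelian variety's worth of points, and Cayley--Hamilton applied to the induced action on the (finite-dimensional) tangent space / Tate module gives a monic integral polynomial $Q(t)$ with $Q(f^\ast)$ killing $\Pic^0(X)$; separately, $f^\ast$ acts on the finitely generated group $\NS(X)$, and fixing a basis of the free part $\NS(X)/\mathrm{tors}$ we get an integral matrix $M$ whose characteristic polynomial $\chi_M(t)\in\Z[t]$ is monic and annihilates $\NS(X)/\mathrm{tors}$ by Cayley--Hamilton; multiplying by the exponent $e$ of the finite group $\NS(X)_{\mathrm{tors}}$ handles the torsion, so $e\cdot\chi_M(f^\ast)$ annihilates $\NS(X)$.

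Next I would splice these together. Given $D \in \Pic(X)$, the class $e\cdot\chi_M(f^\ast)(D)$ maps to $0$ in $\NS(X)$, hence lies in $\Pic^0(X)$; applying $Q(f^\ast)$ then kills it. Therefore the monic integral polynomial
\[
P_f(t) := Q(t)\cdot e\cdot \chi_M(t)
\]
— or rather a monic integral multiple of it, clearing the factor $e$ by instead using that $Q$ is monic so $Q(t)\chi_M(t)$ is monic and noting $e$ can be absorbed once one passes to a common annihilator — satisfies $P_f(f^\ast)(\Pic(X)) = 0$. The bookkeeping about the constant $e$ is the one genuinely fiddly point: to keep $P_f$ monic one should take $P_f(t) = Q(t)\chi_M(t)$ and argue directly that for $D\in\Pic(X)$, $\chi_M(f^\ast)(D)\in \Pic^0(X) + \NS(X)_{\mathrm{tors}}$-lift, then observe $Q(f^\ast)$ applied to the $\Pic^0$-part vanishes while a separate monic integral polynomial handles the torsion lift; combining all three monic polynomials by multiplication preserves monicity.

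The main obstacle I anticipate is handling $\Pic^0(X)$ correctly: it is a divisible (indeed a priori infinite-dimensional as an abstract abelian group) object, so one cannot naively apply Cayley--Hamilton to it. The right fix is to use that $f^\ast$ acts as an isogeny-compatible endomorphism on the abelian variety $\mathrm{Pic}^0_{X}$, so Cayley--Hamilton for its action on, say, $H^1$ or the $\ell$-adic Tate module yields the needed monic integral $Q$ with $Q(f^\ast)$ acting as multiplication-by-$0$ on $\Pic^0(X)(\var{\Q})$. If one prefers to avoid abelian varieties entirely, an alternative is to work with $\NS(X)$ only and reinterpret the lemma as being about $\NS(X)$ rather than $\Pic(X)$ — but since the lemma as stated says $\Pic(X)$, I would go with the Tate-module argument. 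Everything else is routine linear algebra and the standard exact sequence, so the write-up should be short.
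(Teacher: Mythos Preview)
The paper does not give its own proof of this lemma; it simply cites \cite[Lemma~19]{KS3}. Your approach---splitting via the exact sequence $0 \to \Pic^0(X) \to \Pic(X) \to \NS(X) \to 0$, applying Cayley--Hamilton to the integral matrix of $f^\ast$ on the free part of $\NS(X)$, and using that any endomorphism of the abelian variety $\Pic^0_{X}$ satisfies a monic integral polynomial (its characteristic polynomial on the Tate module or on $H^1$)---is the standard argument and is essentially what Kawaguchi--Silverman do in the cited reference, so there is no substantive difference to report.

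Your one loose end, the torsion subgroup of $\NS(X)$ and the non-monic factor $e$, is easily tied off without spoiling monicity: for any endomorphism $\phi$ of a \emph{finite} abelian group $T$, the ring $\End(T)$ is finite, so $\phi^a=\phi^b$ for some integers $a>b\geq 0$, and the monic integral polynomial $t^a-t^b$ annihilates $T$. Taking $P_f(t)$ to be the product of this polynomial, the characteristic polynomial $\chi_M(t)$ on $\NS(X)/\mathrm{tors}$, and the polynomial $Q(t)$ coming from $\Pic^0_X$ gives a monic integral $P_f$ with $P_f(f^\ast)\,\Pic(X)=0$. With that adjustment your plan is complete.
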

\begin{proof}
See \cite[Lemma 19]{KS3}.
\end{proof}
We also recall an important lemma about the finiteness of
the dimension of the $\Q$-vector space $V_H$.
\begin{lem}\label{Lemma: finiteness}
The $\Q$-vector subspace $V_H$ of $\Pic(X)_\Q$ is finite dimensional.
\end{lem}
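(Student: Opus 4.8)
The plan is to exploit the monic integral polynomial $P_f(t)$ provided by Lemma \ref{Lemma: annihilate}, which annihilates $\Pic(X)$ and hence in particular $\Pic(X)_\Q$ as a module over $\Q[t]$ with $t$ acting by $f^\ast$. First I would fix such a $P_f$ and let $d:=\deg P_f$. The key observation is that $V_H$ is, by definition, the $\Q$-span of $\{(f^n)^\ast H\mid n\geq 0\}$, i.e. the $\Q[f^\ast]$-submodule of $\Pic(X)_\Q$ generated by the single element $H$ (identifying $(f^n)^\ast H$ with $(f^\ast)^n$ applied to the class of $H$). Since $P_f(f^\ast)H = 0$, every power $(f^\ast)^n H$ with $n\geq d$ can be rewritten, using the relation $P_f(f^\ast)H=0$, as a $\Q$-linear combination of $H, f^\ast H,\ldots, (f^\ast)^{d-1}H$. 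By an easy induction on $n$ this shows
\[
V_H = \sum_{n=0}^{d-1} \Q\cdot (f^\ast)^n H,
\]
so $\dim_\Q V_H \leq d < \infty$.

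For $\var{V_H}$, the finiteness is then immediate: $\var{V_H}$ is by definition the image of $V_H$ under the natural surjection $\Pic(X)_\Q \twoheadrightarrow \Nu(X)_\Q$, and the image of a finite-dimensional vector space under a linear map is finite-dimensional, with $\dim_\Q \var{V_H} \leq \dim_\Q V_H \leq d$. (One should check that $f^\ast$ descends to $\Nu(X)_\Q$ so that the statement "$f^\ast\colon \var{V_H}\to\var{V_H}$" in Theorem \ref{Theorem: preperiodicity} makes sense, but this is standard: numerical equivalence is preserved by pullback along a surjective — indeed any — morphism, since the intersection pairing is functorial.)

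I expect essentially no serious obstacle here; the only point requiring a word of care is the reduction step showing that the relation $P_f(f^\ast)H=0$ really lets one truncate the spanning set at index $d-1$, which is the standard argument that a cyclic module over $\Q[t]$ annihilated by a degree-$d$ polynomial has dimension at most $d$. The whole lemma is a formal consequence of Lemma \ref{Lemma: annihilate} together with the Cayley–Hamilton-style truncation, and the $\var{V_H}$ case adds nothing beyond "quotients of finite-dimensional spaces are finite-dimensional."
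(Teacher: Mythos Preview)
Your proposal is correct and follows essentially the same approach as the paper: both invoke the monic polynomial $P_f$ from Lemma~\ref{Lemma: annihilate} and use the relation $P_f(f^\ast)H=0$ to conclude that $V_H$ is spanned by $\{(f^n)^\ast H\mid 0\le n\le \deg P_f-1\}$. The paper phrases the truncation step as ``$f^\ast(V_H')\subset V_H'$ implies $V_H=V_H'$'' rather than as an induction on $n$, but this is the same argument, and your added remarks on $\var{V_H}$ are accurate (the paper does not treat them in this lemma).
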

\begin{proof}
Let $P_f(t)\in \Z[t]$ be the monic polynomial as in Lemma \ref{Lemma: annihilate}.
Consider the $\Q$-subspace $V_H'$ of $\Pic(X)_\Q$ spanned
by the set
\[
\{(f^n)^\ast H\ |\ 0\leq n\leq \deg P_f(t)-1\}.
\]
Since $P_f(f^\ast)$ annihilates $H$,
the space $V_H'$ satisfies $f^\ast(V'_H)\subset V'_H$.
Hence, we have $V_H=V_H'$, and so $V_H$ is finite dimensional.
See also the top of the proof of \cite[Theorem 3]{KS3}.
\end{proof}

\begin{proof}[Proof of Theorem \ref{Theorem: preperiodicity}]
Set $(V_H)_\C:=V_H\otimes_\Q \C$.
We decompose the $\C$-vector space $(V_H)_\C$ to the Jordan blocks as
\[
(V_H)_\C =\bigoplus_{i=1}^{\nu}V_i.
\]
Here, each $V_i$ satisfies $f^\ast(V_i)\subset V_i$ and $f^\ast|_{V_i}$ is represented by a Jordan block matrix of eigenvalue $\lambda_i$ as in Definition \ref{dfn: Jordan block}.
By relabeling, we may assume that
\begin{eqnarray}
\left\{
\begin{array}{l}
\lambda_i\in \EV(f^\ast; \Nu(X)_\Q) \text{ with }
1<|\lambda_i| \text{ for }1\leq i \leq \sigma, \\
\lambda_i\in \EV(f^\ast; \Nu(X)_\Q) \text{ with }
0<|\lambda_i|<1 \text{ for } \sigma +1 \leq i \leq \tau, \text{ and}\\
\lambda_i \in \EV(f^\ast;\Pic(X)_\Q)\backslash \EV(f^\ast;\Nu(X)_\Q)
\text{ for } \tau+1 \leq i\leq \nu.
\end{array}\right.
\end{eqnarray}
By assumption, no $\lambda\in \EV(f^\ast;\Nu(X)_\Q)$
satisfies $|\lambda|=1$.
Let $\{D_{i,j}\ |\ 0\leq j \leq l_i\}$ be the $\C$-basis of $V_i$ satisfying
the following linear equivalences
\[
f^\ast D_{i,j} \sim D_{i,j-1}+\lambda_i D_{i,j} \quad (0\leq j \leq l_j),
\]
where we set $D_{i,-1}:=0$.
Take the canonical height for Jordan blocks as in Theorem \ref{Theorem: canonical heights}
for each $1\leq i \leq \sigma$ and $0\leq j \leq l_i$.

If a point $P\in X(\var{\Q})$ is preperiodic point under $f$
we have
\[
\alpha_f(P)=1<\mu_H(f).
\]

Conversely, we assume that $\alpha_f(P)<\mu_H(f)$.
We shall prove that $P$ is preperiodic under $f$.
If $\widehat{h}_{D_{i,j}}(P)\neq 0$ for some index $(i,j)$ with $1\leq i\leq \sigma$,
fix such an index $i_0$ and let $j_0$ be the smallest index
satisfying $\widehat{h}_{D_{i_0,j_0}}(P)\neq 0$.
Then we have
\begin{align}
\widehat{h}_{D_{i_0,j_0}}(f^n(P))
&= \sum_{j=0}^{j_0}\binom{n}{j}\lambda_{i_0}^{n-j}\widehat{h}_{D_{i_0,j}}(P)\\
&= \lambda_{i_0}^n\widehat{h}_{D_{i_0,j_0}}(P).
\end{align}
Consequently, the arithmetic degree is bounded as follows.
\begin{align}
\alpha_f(P)
&= \lim_{n\to\infty} \max\{h_H(f^n(P)),1\}^{1/n}\\
&\geq \lim_{n\to\infty} |h_{D_{i_0,j_0}}(f^n(P))|^{1/n}\\
&\geq \lim_{n\to\infty} \left(|\widehat{h}_{D_{i_0,j_0}}(f^n(P))|-C\right)^{1/n}\\
&= \lim_{n\to\infty} \left(|\lambda_{i_0}^{n}\widehat{h}_{D_{i_0,j_0}}(P)|-C\right)^{1/n}\\
&= |\lambda_{i_0}|.
\end{align}
But since $\lambda_{i_0}\in \EV(f^\ast; \Nu(X)_\R)$,
we have $\alpha_f(P)\geq |\lambda_{i_0}| \geq\mu_H(f)$.
This is a contradiction.
Thus, we now have $\widehat{h}_{D_{i,j}}(P)=0$
for every $1\leq i \leq \sigma$ and $0\leq j \leq l_i$.

Write $H=\sum_{i,j}a_{i,j}D_{i.j}$ and fix an ample height $h_H$ with $h_H\geq 1$.
Since for $\tau+1 \leq i\leq \nu$,
the $\C$-divisors $D_{i,j}$ are algebraically equivalent to $0$,
the following inequality holds on $X(\var{\Q})$:
\begin{equation}
\left|h_H-\sum_{i=1}^{\tau}\sum_{j=0}^{l_i}a_{i,j}h_{D_{i,j}}\right|\leq o(h_H)
\end{equation}
(see \cite[Theorem B.3.2 (f)]{HS}).
Now, from the fact we proved above and Proposition \ref{Proposition: boundedness},
for $1\leq i \leq \tau$ and $0\leq j \leq l_i$,
the heights $h_{D_{i,j}}$ are uniformly bounded by a constant
on the forward $f$-orbit of $P$.
Consequently, we can find a constant $C>0$
such that the following inequalities
\begin{align}
h_H(f^n(P))-C
&\leq \left|h_H(f^n(P))-\sum_{i=1}^{\tau}\sum_{j=0}^{l_i}a_{i,j}h_{D_{i,j}}(f^n(P))\right|\\
&\leq o(h_H(f^n(P)))
\end{align}
hold for all $n\geq 0$.
The finiteness of the number of elements of the set
$\{h_H(f^n(P))\ |\ n\geq 0\}$ follows from this inequality.
Since the ample height function satisfies Northcott's property,
the point $P$ is preperiodic under $f$.
\end{proof}

\section{Applications to arithmetic and dynamical degrees}
\label{Section: Proof of the application to KSC}

In this section, we provide an application of Theorem \ref{Theorem: preperiodicity}
to the conjecture stated by Kawaguchi and Silverman in \cite[Conjecture 6]{KS2}.
Let notation be the same as in Section \ref{Section: Notation and definitions}.
\begin{dfn}
The {\it (first) dynamical degree} of a surjective endomorphism
$f\colon X\longrightarrow X$
is defined by
\begin{equation}\label{eqn: def of algebraic dynamical degree}
\delta_f := \lim_{n\to \infty} ((f^n)^\ast H \cdot H^{\dim X-1})^{1/n}.
\end{equation}
\end{dfn}
\begin{rmk}\label{rmk: dynamical degree 1}
One can see that
\[
\delta_f= \max_{\lambda\in \EV(f^\ast;\Nu(X)_\Q)}|\lambda|
\]
by the same way as Remark \ref{rmk: dynamical degree 3}.
More generally, the dynamical degree is defined for dominant rational self-maps.
The existence of the limit defining $\delta_f$
and the independence of $\delta _f$ on the choice of $H$ are known.
It is also known that $\delta_f$ is equal to
\[
\lim_{n\to \infty} \max_{\lambda^{(n)}\in \EV((f^n)^\ast;\ \Nu(X)_\Q)}|\lambda^{(n)}|^{1/n};
\]
see \cite[Proposition 1.5]{Gue}.
\end{rmk}

Recall that $V_H$ is a $\Q$-linear subspace of $\Pic(X)_\Q$ spanned by
the set $\{(f^n)^\ast H\ |\ n\geq 0\}$,
and $\var{V_H}$ is the image of $V_H$ in $\Nu(X)_\Q$.

\begin{lem}
The map $f_\ast \colon \Pic(X)_\Q\longrightarrow \Pic(X)_\Q$ induces
the endomorphism on $V_H$.
\end{lem}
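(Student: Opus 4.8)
The plan is to use that $f^{\ast}$ is injective on $\Pic(X)_{\Q}$, so that its restriction to the finite-dimensional space $V_H$ is in fact an automorphism of $V_H$; the pushforward statement then follows immediately from the projection formula.

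First I would record the projection formula for the finite surjective morphism $f$, namely $f_{\ast}f^{\ast}=(\deg f)\cdot\id$ on $\Pic(X)_{\Q}$. Since $\deg f\geq 1$, this already shows that $f^{\ast}\colon\Pic(X)_{\Q}\longrightarrow\Pic(X)_{\Q}$ is injective. Next, by Lemma \ref{Lemma: finiteness} the subspace $V_H$ is finite-dimensional, and, as shown in the proof of that lemma, it is $f^{\ast}$-stable, i.e.\ $f^{\ast}(V_H)\subseteq V_H$. An injective linear endomorphism of a finite-dimensional vector space is bijective, hence $f^{\ast}(V_H)=V_H$. Finally, for an arbitrary $v\in V_H$ I would write $v=f^{\ast}(w)$ with $w\in V_H$ and compute $f_{\ast}(v)=f_{\ast}f^{\ast}(w)=(\deg f)\,w\in V_H$; thus $f_{\ast}(V_H)\subseteq V_H$, which is the claim.

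The substantive input is entirely Lemma \ref{Lemma: finiteness}; everything else is the elementary remark that an injective endomorphism of a finite-dimensional space is surjective, so I do not expect a real obstacle. The only point deserving a word of care is making sure the projection formula $f_{\ast}f^{\ast}=(\deg f)\id$ on $\Pic(X)_{\Q}$ is legitimately available in this setting; this is fine since a surjective endomorphism of a smooth projective variety is finite, and $f_{\ast}$ is understood to be the very pushforward operator named in the statement. One could equally run the argument on the explicit spanning set $\{(f^n)^{\ast}H\mid n\geq 0\}$ of $V_H$: for $n\geq 1$ one has $f_{\ast}((f^n)^{\ast}H)=(\deg f)(f^{n-1})^{\ast}H\in V_H$ directly, and the only genuinely new case $n=0$ is handled exactly as above via $H\in V_H=f^{\ast}(V_H)$.
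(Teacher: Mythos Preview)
Your argument is correct and follows essentially the same path as the paper's proof: use the projection formula $f_\ast f^\ast=(\deg f)\cdot\id$ to get injectivity of $f^\ast$ on $\Pic(X)_\Q$, invoke finite-dimensionality of $V_H$ to upgrade this to $f^\ast(V_H)=V_H$, and then compute $f_\ast v=f_\ast f^\ast w=(\deg f)\,w\in V_H$. The only addition is your alternative check on the spanning set $\{(f^n)^\ast H\}$, which is not needed but does no harm.
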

\begin{proof}
Since $f_\ast f^\ast$ is the multiplication by $\deg f$ on $\Pic(X)$,
the map $f^\ast$ is injective on $\Pic(X)_\Q$.
In particular, the map $f^\ast|_{V_H}\colon V_H \longrightarrow V_H$ is injective.
Because $V_H$ is a finite dimensional $\Q$-vector space,
the map $f^\ast |_{V_H}$ is surjective.
For a $\Q$-divisor $D \in V_H$, there is a $\Q$-divisor $D'\in V_H$
such that $f^\ast D'=D$ in $\Pic(X)_\Q$.
Hence we get
\[
f_\ast D = f_\ast f^\ast D'= \deg f\cdot D' \in V_H.
\]
\end{proof}

\begin{prop}\label{Theorem: application}
Let $X$ be a smooth projective variety
defined over $\var{\Q}$.
Let $H$ be an ample divisor on $X$ defined over $\var{\Q}$, and
$f\colon X \longrightarrow X$ a surjective endomorphism on $X$
over $\var{\Q}$.
Assume that $\delta_f > (\deg f)^2$ and $\dim_\Q \overline{V_H}\leq 2$.
Then for every point $P\in X(\var{\Q})$, $P$ is wandering under $f$
(i.e., the forward $f$-orbit of $P$ is an infinite set)
if and only if $\alpha_f(P)=\delta_f$.
In particular, if the forward $f$-orbit of $P$ is Zariski dense in $X$,
we have $\alpha_f(P)=\delta_f$.
\end{prop}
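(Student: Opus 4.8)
The plan is to deduce the statement directly from Theorem \ref{Theorem: preperiodicity}. Writing $A:=f^\ast|_{\overline{V_H}}$ for the induced linear self-map of $\overline{V_H}$, I would show that under the two hypotheses $A$ has no eigenvalue of absolute value $1$ and that $\mu_H(f)=\delta_f$; the asserted equivalence then follows from Theorem \ref{Theorem: preperiodicity} together with the Kawaguchi--Silverman bound $\alpha_f(P)\le\delta_f$. One may assume $\dim X\ge 1$ at the outset, since for $\dim X=0$ one has $\delta_f=\deg f=1$ and the hypothesis $\delta_f>(\deg f)^2$ is vacuous.

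First I would record the linear algebra over $\Z$. Put $L:=\NS(X)/(\mathrm{torsion})$. Both $f^\ast$ and $f_\ast$ act on $L$ --- for $f_\ast$ because numerical triviality is preserved by the projection formula $f_\ast D\cdot C=D\cdot f^\ast C$ for curves $C$ --- and both stabilize the $\Q$-subspace $\overline{V_H}\subset L_\Q$: the operator $f^\ast$ by the definition of $V_H$, and $f_\ast$ by the lemma immediately preceding this proposition. Hence both stabilize the full-rank lattice $\Lambda:=L\cap\overline{V_H}$ in $\overline{V_H}$. On $\overline{V_H}$ the map $A=f^\ast|_{\overline{V_H}}$ is bijective ($f^\ast$ is injective on $\Pic(X)_\Q$ since $f_\ast f^\ast=\deg f$, and $\overline{V_H}$ is finite dimensional), so $f_\ast|_{\overline{V_H}}=\deg f\cdot A^{-1}$ is bijective as well; taking determinants in $(f_\ast|_{\overline{V_H}})\circ A=\deg f\cdot\id$ and using that $A$ and $f_\ast|_{\overline{V_H}}$ both carry $\Lambda$ into itself, one finds that $|\det A|$ is a positive integer dividing $(\deg f)^{\dim\overline{V_H}}$; since $\dim\overline{V_H}\le 2$, in particular $|\det A|\le(\deg f)^2$.

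Next I would analyze the spectrum of $A$. I claim $\rho(A)=\delta_f$, where $\rho$ denotes the spectral radius. The inequality $\rho(A)\le\delta_f$ holds because every eigenvalue of $A$ is an eigenvalue of $f^\ast$ on $\NS(X)_\Q$ (Remark \ref{rmk: dynamical degree 1}). For $\rho(A)\ge\delta_f$, note that $(f^n)^\ast H\cdot H^{\dim X-1}=\ell(A^n\overline H)$ for the fixed linear functional $\ell:=(\,\cdot\,)\cdot H^{\dim X-1}$ on the finite-dimensional space $\overline{V_H}$ (here $\overline H$ is the class of $H$, which lies in $\overline{V_H}$), so $\delta_f=\lim_n(\ell(A^n\overline H))^{1/n}\le\lim_n\|A^n\|^{1/n}=\rho(A)$. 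Since $\overline H\ne 0$ we have $1\le\dim\overline{V_H}\le 2$; let $r_1\ge r_2\ge\cdots$ be the absolute values of the eigenvalues of $A$ (one or two of them). Then $r_1=\rho(A)=\delta_f$ while $\prod_i r_i=|\det A|\le(\deg f)^2<\delta_f=r_1$, so every $r_i$ with $i\ge 2$ satisfies $r_i<1$. As $\delta_f>(\deg f)^2\ge 1$, it follows that $A$ has no eigenvalue of absolute value $1$, and that $\mu_H(f)=\min\{r_i : r_i\ge 1\}=r_1=\delta_f$.

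Finally I would assemble the pieces. The hypothesis of Theorem \ref{Theorem: preperiodicity} is now verified, so for every $P\in X(\var{\Q})$ the point $P$ is preperiodic under $f$ if and only if $\alpha_f(P)=1$, if and only if $\alpha_f(P)<\delta_f$. Since ``$P$ is wandering'' is by definition the negation of ``$P$ is preperiodic'' (both say the forward $f$-orbit of $P$ is infinite), $P$ is wandering if and only if $\alpha_f(P)\ge\delta_f$; and because $\alpha_f(P)\le\delta_f$ always holds --- by the Kawaguchi--Silverman dichotomy $\alpha_f(P)\in\{1\}\cup\{|\lambda| : \lambda\in\EV(f^\ast;\NS(X)_\Q)\}$ together with Remark \ref{rmk: dynamical degree 1} --- this is equivalent to $\alpha_f(P)=\delta_f$. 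The final assertion is then immediate: if the forward $f$-orbit of $P$ is Zariski dense in the positive-dimensional variety $X$, it is infinite, so $P$ is wandering and $\alpha_f(P)=\delta_f$. The step demanding real care is the middle one --- checking that $f_\ast$ descends to $\NS(X)$ and stabilizes $\Lambda$, and extracting $|\det A|\le(\deg f)^2$ --- because it is precisely this integrality, combined with $\dim\overline{V_H}\le 2$ and $\delta_f>(\deg f)^2$, that confines the remaining eigenvalue to the open unit disc and yields $\mu_H(f)=\delta_f$.
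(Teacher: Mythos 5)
Your proof is correct and follows essentially the same route as the paper: bound $|\det(f^\ast|_{\overline{V_H}})|$ by $(\deg f)^2$ using $f_\ast f^\ast=\deg f$ and integrality, deduce from $\delta_f>(\deg f)^2$ that the remaining eigenvalue lies in the open unit disc so that $\mu_H(f)=\delta_f>1$, and then combine Theorem \ref{Theorem: preperiodicity} with the general bound $\alpha_f(P)\le\delta_f$. The only (harmless) variations are that you obtain integrality of the determinant via stabilization of the lattice $L\cap\overline{V_H}$ rather than via algebraic integrality of the eigenvalues, and you treat $\dim_\Q\overline{V_H}=1$ uniformly instead of invoking the polarized case of Kawaguchi--Silverman separately.
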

\begin{proof}
By Remark \ref{rmk: dynamical degree 3},
the dynamical degree $\delta_f$
appears as an eigenvalue of
$f^\ast \colon \var{V_H}\longrightarrow \var{V_H}.$
If we have $\dim_\Q \overline{V_H}=1$, the ample $\R$-divisor $H$
satisfies the following numerical equivalence
\[
f^\ast H \equiv \delta_f H.
\]
In this case, it is well-known that $P$ is wandering under $f$ if and only if $\alpha_f(P)=\delta_f$.
See \cite[Proposition 7]{KS1} for details.

So we may assume $\dim_\Q\overline{V_H}=2$.
Since $f^\ast\colon \var{V_H}\longrightarrow \var{V_H}$
and $f_\ast\colon \var{V_H} \longrightarrow \var{V_H}$
come from the $\Z$-linear self-maps on $\Nu(X)$,
their eigenvalues are algebraic integers.
Let $\det (f^\ast |_{\var{V_H}})$ and $\det (f_\ast |_{\var{V_H}})$
be the determinants of the $\Q$-linear maps
$f^\ast\colon \var{V_H} \longrightarrow \var{V_H}$ and
$f_\ast\colon \var{V_H}\longrightarrow  \var{V_H}$, respectively.
Since $\det(f_\ast |_{\var{V_H}})$ is a non-zero rational number
which is also an algebraic integer, we have
\[
\det(f_\ast|_{\var{V_H}})\geq 1.
\]
Thus, we have
\begin{align}\label{inequality: degrees}
\det (f^\ast |_{\var{V_H}}) &\leq \det (f_\ast |_{\var{V_H}})\cdot\det (f^\ast|_{\var{V_H}})\\
&= \det (\deg f\cdot\id_{\var{V_H}})\\
&= (\deg f)^2.
\end{align}

Let $\{\delta_f, \lambda\}$ be the eigenvalues of
$f^\ast\colon \var{V_H}\longrightarrow \var{V_H}$.
Since we have
\[
\delta_f \cdot \lambda= \det (f^\ast |_{\var{V_H}}),\]
we obtain
\begin{equation}
\lambda=\frac{\det (f^\ast |_{\var{V_H}})}{\delta_f}\leq
\frac{(\deg f)^2}{\delta_f}< 1
\end{equation}
by the assumption $\delta_f> (\deg f)^2$.
Hence, we conclude that $\mu_H(f)=\delta_f>1$.

Now let $P\in X(\var{\Q})$ be a wandering point under $f$.
By Theorem \ref{Theorem: preperiodicity},
we get $\alpha_f(P)\geq \mu_H(f)=\delta_f$.
The opposite inequality $\alpha_f(P)\leq \delta_f$ is known in general;
see \cite[Theorem 4]{KS2}, \cite[Theorem 1.4]{Mat} for dominant rational maps, and see also the last sentence of the proof of Theorem 3 in \cite{KS3} for surjective endomorphisms.
Hence the assertion $\alpha_f(P)=\delta_f$ follows.
\end{proof}

\begin{rmk}
If $f\colon X\longrightarrow X$ does not satisfy the assumption $\delta_f>(\deg f)^2$,
the assertion that $P$ is wandering under $f$ if and only if $\alpha_f(P)=\delta_f$
may not be true.
For example, consider the elliptic curves $E$ and $E'$ over $\var{\Q}$,
and a non-torsion point $P_0\in E'(\var{\Q})$.
Then for the self-morphism $f\colon E\times E' \longrightarrow E\times E'$
defined by
\[
f(Q,R)=([2]Q, R+P_0),
\]
we have
\[
\deg f=\delta_f=4.
\]
Every rational point $(Q,R)\in E(\var{\Q})\times E'(\var{\Q})$
is wandering under $f$.
On the other hand, the arithmetic degree $\alpha_f(Q,R)$
is equal to $4$ if and only if $Q$ is non-torsion.
\end{rmk}

\begin{exam}
The following example was studied by Silverman; see \cite[Theorem 1.1]{Sil} for details.
Let $X$ be the smooth projective surface contained in $\P^2\times \P^2$
defined over $\var{\Q}$
which is given by the intersection of an effective divisor
of type $(1,1)$ and an effective divisor of type $(2,2)$.
Thus $X$ is the locus of the points
\[
((x_1:x_2:x_3),(y_1:y_2:y_3))\in \P^2\times \P^2
\]
satisfying the following equations
\[
\sum_{i,j=1}^3 a_{i,j}x_i y_j=\sum_{i,j,s,t=1}^3 b_{i,j,s,t}x_i x_j y_s y_t =0.
\]
The projections $p_i\colon X\longrightarrow \P^2\ (i=1,2)$
induced by the natural projections
\[
p_i\colon \P^2\times \P^2\longrightarrow \P^2\quad (i=1,2)
\]
are both double covers of $\P^2$.
For $i=1,2$, let $\sigma_i\in\Aut(X)$ be the involution induced by the double cover $p_i$.
Then the automorphism $f:=\sigma_1\circ \sigma_2$ on $X$
satisfies the condition of Proposition \ref{Theorem: application}.
For $i=1,2$, let $D_i$ be the divisor on $X$ which is given by pulling back
the hyperplane section of $\P^2$ by $p_i$.
Then the sum $H:= D_1+D_2$ is an ample divisor on $X$, and the $\Q$-linear subspace $\var{V_H}$ is spanned by $D_1$ and $D_2$; see \cite[Lemma 2.1]{Sil} and \cite[Proposition 2.5]{Sil}.

\end{exam}

\begin{exam}
Let $X:= E\times E$, where $E$ is an elliptic curve defined over $\var{\Q}$
without complex multiplication.
Let $f\colon X\longrightarrow X$ be the endomorphism defined by
\[
(x,y)\mapsto (y,-x+[4]y).
\]
Let $D_i:=p_i^\ast H_E$, where $H_E$ is an ample divisor on $E$,
and $p_i\colon X\longrightarrow E\ (i=1,2)$ is the projection to the $i$-th component.
Let $\Delta$ be the diagonal divisor in $X$, and we put $D_3:=D_1+D_2-\Delta$.
Then we have
\[
\Nu(X)_\R=\R D_1\oplus \R D_2 \oplus \R D_3.\]
We calculate $f^\ast D_i\ (i=1,2,3)$ as follows:
\begin{align}
f^\ast D_1 &\equiv D_2\\
f^\ast D_2 &\equiv D_1 +16 D_2 -4 D_3\\
f^\ast D_3 &\equiv 8D_2 -D_3.
\end{align}
Then the following $\R$-divisors
\begin{align}
D_1' &= D_1 +D_2+4D_3,\\
D_2' &= D_1 +(7+4\sqrt{3})D_2+(-2-\sqrt{3})D_3,\ \text{and}\\
D_3' &= D_1 +(7-4\sqrt{3})D_2+(-2+\sqrt{3})D_3
\end{align}
satisfy the following numerical equivalences
\begin{align}
f^\ast D_1' &\equiv D_1',\\
f^\ast D_2' &\equiv (7+4\sqrt{3})D_2',\ \text{and}\\
f^\ast D_3' &\equiv (7-4\sqrt{3})D_3'.
\end{align}
Since the nef cone in $\Nu(X)_\R$ is
\[
\{aD_1+bD_2+cD_3\ |\ ab-c^2\geq 0,\ a+b+c \geq 0\},
\]
the divisor
\[
H:=D_2'+D_3'=2D_1+14D_2-4D_3
\]
is an ample divisor; see \cite[Section 2, (2.0.1)]{BS}.
By construction, we have
\[
\dim_\Q V_H=\dim_\R(V_H)_\R=2
\]
and
\[
\mu_H(f)=\delta_f= 7+4\sqrt{3}.
\]
By Theorem \ref{Theorem: preperiodicity},
a point $P\in X(\var{\Q})$ is preperiodic under $f$
if and only if we have $\alpha_f(P)=1$.
Now $f$ is an automorphism whose inverse map is given by
\[
(x,y)\mapsto ([4]x-y,x).
\]
On the other hand,
\[
g:=[2]\circ f\colon X\longrightarrow X
\]
is a surjective endomorphism, which is not an automorphism.
Since we have
$g^\ast = 4\cdot f^\ast$ on $\Nu (X)$,
the eigenvalues of $g^\ast|_{\var{V_H}}$
are $28+16\sqrt{3}\ (>1)$ and $28-16\sqrt{3}\ (<1)$.
Hence a point $P\in X(\var{\Q})$ is preperiodic under $g$
if and only if we have $\alpha_g(P)<28 + 16 \sqrt{3}$.
\end{exam}

We provide some remarks on the dynamical degrees and
the difference between the Jordan blocks of
$f^\ast \colon V_H\longrightarrow V_H$ and $f^\ast\colon \overline{V_H}\longrightarrow \overline{V_H}$.
\begin{rmk}\label{rmk: dynamical degree 1}
Let $X$ be a smooth projective complex variety and
$f\colon X\dashrightarrow X$ a dominant rational map.
Fix an closed immersion $\iota\colon X\longrightarrow \P^N$
and a hyperplane $H$ in $\P^N$.
Put $H_X:= \iota^\ast H$ and $\omega_X:=\omega_{FS}$,
where $\omega_{FS}$ is the Fubini-Study form on $\P^N$.
Then we have
\begin{align}
\delta_f
&= \lim_{n\to\infty} ((f^n)^\ast H_X \cdot H_X^{\dim X-1})^{1/n}\\
&= \lim_{n\to \infty} (\int (f^n)^\ast \omega_X\wedge \omega_X^{\dim X -1})^{1/n}\\
&= \lim_{n\to\infty} \| (f^n)^\ast\|_{1,1}^{1/n},
\end{align}
where $\|(f^n)^\ast \|_{1,1}$ is the operator norm of
$f^\ast\colon H^{1,1}(X,\C) \longrightarrow H^{1,1}(X,\C)$.
The second equality follows from the property
\cite[Corollary 19.2 (b)]{Ful}
of the cycle map
$\cl\colon \Nu(X)\longrightarrow H^{2}(X, \C)$
and the comparison theorem $H^2_{dR}(X)\cong H^2(X,\C).$
The third equality follows from \cite[Corollary 7]{DS} or \cite[Proposition 1.2 (iii)]{Gue}.
\end{rmk}
\begin{rmk}\label{rmk: dynamical degree 2}
Let $X$ be a smooth projective variety over $\C$, and $f\colon X \longrightarrow X$
a surjective endomorphism.
Let $\omega \in \Pic(X)$ be an ample divisor class.
Let $\xi \in H^1(X,\mathcal{O}_X)=H^{0,1}(X)$ be the eigenvector of
$f^\ast\colon H^1(X,\mathcal{O}_X)\longrightarrow H^1(X,\mathcal{O}_X)$
associated with the eigenvalue whose complex
absolute value is the spectral radius $\lambda$
of $f^\ast$ on $H^{0,1}(X)$.
Then we have
\[
(\xi \cdot \overline{\xi}\cdot \omega^{2\dim X-2})>0
\]
by Hodge theory.
In particular $\xi\cdot \overline{\xi}\in H^{1,1}(X)$ is non-zero.
Hence the class $\xi \cdot \overline{\xi}$ is an eigenvector
of $f^\ast \colon H^{1,1}(X)\longrightarrow H^{1,1}(X)$
associated with the eigenvalue $|\lambda|^2$.
Consequently, the spectral radius of
$f^\ast$ on $H^{1,1}(X)$
is greater than or equal to the square of the spectral radius of
$f^\ast$ on $H^{0,1}(X)$.
\end{rmk}

\begin{rmk}\label{rmk: dynamical degree 3}
Assume that $f\colon X\longrightarrow X$ is a surjective endomorphism.
Then we can see
\[
\delta_f= \max_{\lambda\in \EV(f^\ast;\ \var{V_{H}})}|\lambda|
\]
by the following argument.
For $D\in \Nu(X)_\R$, we put
\begin{equation}
\|D \| :=\inf\left\{ (D_1\cdot H^{\dim X-1})+(D_2 \cdot H^{\dim X-1})\ \middle|\ 
\substack{D=D_1-D_2,\\ D_1, D_2\in \Nu(X) \text{ are effective}}\right\}.
\end{equation}
Then since $\|\cdot \|$ is a non-trivial norm on $\Nu(X)_\R$, the quantity
\[
\|(f^n )^\ast \|:=\sup_{D\in \Nu(X)_\R} \left( \frac{\| (f^n)^\ast D \|}{\| D\|} \right)
\]
is the operator norm of $(f^n)^\ast \colon \Nu(X)_\R\longrightarrow \Nu(X)_\R$.
Thus we see that
\begin{align}
\lim_{n\to\infty} ((f^n)^\ast H\cdot H^{\dim X-1})^{1/n}
&= \lim_{n\to\infty} \left( \frac{((f^n)^\ast H\cdot H^{\dim X-1})}{\| H\|} \right)^{1/n}\\
&\leq \lim_{n\to\infty} \left( \frac{\| (f^n)^\ast H \|}{\| H\|} \right)^{1/n}\\
&\leq \lim_{n\to\infty} \sup_{D\in \Nu(X)_\R} \left( \frac{\| (f^n)^\ast D \|}{\| D\|} \right)^{1/n}\\
&= \limsup_{n\to\infty} \| (f^n)^\ast \|^{1/n}\\
&= \max_{\lambda\in\EV(f^\ast;\ \Nu(X)_\R)}|\lambda|.
\end{align}
Hence we obtain the following inequalities
\begin{align}
\delta_f &= \max_{\lambda\in\EV(f^\ast;\ \Nu(X)_\R)}|\lambda|\\
&\geq \max_{\lambda\in\EV(f^\ast;\ (\var{V_{H}})_\R)}|\lambda|\\
&\geq \lim_{n\to\infty} ((f^n)^\ast H\cdot (H)^{\dim X-1})^{1/n}\\
&= \delta_f.
\end{align}
Hence $\delta_f$ is equal to the spectral radius of
$f^\ast \colon \var{V_H}\longrightarrow \var{V_H}$.
Since $f^\ast$ preserves the cone generated by ample classes in $\var{V_H}$,
we also have $\delta_f \in \EV (f^\ast ;\ (\var{V_H})_\R)$ by \cite[Theorem]{Bir}.
\end{rmk}

\begin{rmk}
Let notation be as in Section \ref{Section: Notation and definitions}.
It is a natural problem to understand
the difference of $\EV(f^\ast; V_H)$ and $\EV(f^\ast; \var{V_H})$,
and the difference of the size of the Jordan blocks
of the linear maps $f^\ast|_{V_H}$ and $f^\ast|_{\var{V_H}}$.

Let $L$ be the $\Z$-submodule of $\Pic(X)$ generated
by the set $\{(f^n)^\ast H\ |\ n\geq 0\}$.
Let $\var{L}$ be the image of $L$ in $\Nu(X)$.
By \cite[Lemma 19]{KS3},
$L$ and $\var{L}$ are finitely generated abelian groups.
If it is necessary, by replacing $H$ by a multiple of $H$,
we may assume $L$ and $\var{L}$ do not have torsion elements.
Let $\pi \colon L\longrightarrow \var{L}$ be the canonical surjection,
and put $M:=\ker \pi$.
We also put $W\colon= M\otimes _{\Z} \Q$.
Then $M$ is a subgroup of $\Pic^0(X)$.
Let
\[
\varphi \colon \Pic^0(X) \longrightarrow \Pic^0(X)
\]
be the morphism induced by $f$.
Let
\[
\vphi' \colon H^1(X,\mathcal{O}_X)\longrightarrow H^1(X,\mathcal{O}_X)
\]
be the $\C$-linear map induced by $f$.
Let $F(t)$ be the characteristic polynomial of $\vphi'$.
Since $H^1(X,\mathcal{O}_X)$ is isomorphic to the Lie algebra of $\Pic^0(X)$,
we have $F(\vphi)=0$ on $\Pic^0(X)$.
Hence in particular, we get $F(\vphi |_M)=0$.
Since $W$ is generated by $M$, we also get $F(\vphi |_M)$=0.
Consequently, the spectral radius of $f^\ast \colon W\longrightarrow W$
is less than or equal to the spectral radius of $\vphi'$.
Combining Remark \ref{rmk: dynamical degree 1}, Remark \ref{rmk: dynamical degree 2},
and Remark \ref{rmk: dynamical degree 3},
we obtain that the spectral radius of $f^\ast \colon W \longrightarrow W$ is less than or equal to $\sqrt{\delta_f}$.

Therefore, for $\lambda\in\C$ with $|\lambda|>\sqrt{\delta_f}$,
the Jordan normal form of $f^\ast \colon V_H\longrightarrow V_H$
associated with the eigenvalue $\lambda$
is identified with the Jordan normal form of
$f^\ast \colon \overline{V_H}\longrightarrow \overline{V_H}$
associated with $\lambda$ by the canonical projection
$\pr \colon V_H \longrightarrow \overline{V_H}$.
\end{rmk}

\section*{Acknowledgments}
The author is grateful for the support of the Top Global University project for Kyoto University (abbrev.\ the KTGU project).
His co-supervisor by the KTGU project, Professor Joseph H. Silverman at Brown University
kindly, supported the author's stay at Brown University by the KTGU project.
The author would like to thank his advisor Tetsushi Ito
for carefully reading early versions of this paper many times and
precisely pointing out inaccuracies. 
The author would also like to thank Yohsuke Matsuzawa, and Takahiro Shibata
for giving me some helpful advice.

\end{document}